\documentclass[11pt]{article}

\headsep 0.5 true cm
\topmargin 0pt
\oddsidemargin 0pt
\parskip=1.2mm

\baselineskip 0.5cm
\textheight 22.0cm
\textwidth  16.0cm

\usepackage{ amsthm, amsmath,amssymb}

\def \[{\begin{equation}}
\def \]{\end{equation}}

\def\bdes{\begin{description}}
\def\edes{\end{description}}
\def\benu{\begin{enumerate}}
\def\eenu{\end{enumerate}}
\def\bitm{\begin{itemize}}
\def\eitm{\end{itemize}}

\def\R{{\sl I\kern-3.2pt R}}

\def\sqr#1#2{{\vcenter{\hrule height .#2pt
      \hbox{\vrule width .#2pt height#1pt \kern#1pt\vrule width.#2pt}
                       \hrule height.#2pt}}}

\newtheorem{lemma}{Lemma}[section]

\newtheorem{thm}{Theorem}[section]

\begin{document}

\title{{ \Large \bf Symmetry and Nonexistence of Positive Solutions for Fractional
Choquard Equations}
}
\author{\;\;Pei Ma, Jihui Zhang\thanks{Corresponding author.}}

\date{\today}
\maketitle

\begin{abstract}
This paper is devoted to study the following Choquard equation
\begin{eqnarray*}\left\{
\begin{array}{lll}
 (-\triangle)^{\alpha/2}u=(|x|^{\beta-n}\ast u^p)u^{p-1},~~~&x\in R^n,\\
 u\geq0,\,\,&x\in R^n,
 \end{array}
 \right.
\end{eqnarray*}
where $0<\alpha,\beta<2$, $1\leq p<\infty$, and $n\geq2$.
Using a direct method of moving planes, we prove the symmetry and nonexistence of positive solutions in the critical and subcritical case respectively.
\end{abstract}

\bigskip\noindent
{\bf Key words}: The method of moving planes, fractional Laplacian, Choquard equation.

\section{Introduction}
We study the following Choquard equation involving the fractional Laplacian
\begin{eqnarray}\label{1.1}
\left\{
\begin{array}{ll}
 (-\triangle)^{\alpha/2}u=(|x|^{\beta-n}\ast u^p)u^{p-1},\,\,&x\in R^n,\\
 u\geq0,\,\,&x\in R^n,
 \end{array}
 \right.
\end{eqnarray}
where $0<\alpha, \beta<2$, $1\leq p<\infty$ and $n\geq2$.

 The fractional Laplacian in $R^n$ is a nonlocal pseudo-differential operator taking the form
\begin{eqnarray}\label{1.2}
(-\triangle)^{\alpha/2}u(x)=C_{n,\alpha}PV
\int_{R^n}\frac{u(x)-u(y)}{|x-y|^{n+\alpha}}dy=C_{n,\alpha}\lim_{\varepsilon\rightarrow
0} \int_{R^n\setminus
B_\varepsilon(x)}\frac{u(x)-u(y)}{|x-y|^{n+\alpha}}dy,
\end{eqnarray}
where $C_{n,\alpha}$ is a normalization constant. This operator is well defined in
$\mathcal{S}$, the Schwartz space of rapidly decreasing $C^\infty$
functions in $R^n$. In this space, it can also be equivalently
defined in terms of the Fourier transform
\begin{eqnarray*}
\mathcal{F}[(-\triangle)^{\alpha/
2}u](\xi)=|\xi|^\alpha\mathcal{F}u(\xi),
\end{eqnarray*}
where $\mathcal{F}u$ is the Fourier transform of $u$. One can extend
this operator to a wider space of distributions:
\begin{eqnarray*}
\mathcal{L}_\alpha= \{u:R^n\rightarrow
R\mid\int_{R^n}\frac{|u(x)|}{1+|x|^{n+\alpha}}dx<\infty\}.
\end{eqnarray*}
Then in this space, we defined $(-\triangle)^{\alpha/2}u$ as a
distribution by

\begin{eqnarray*}
\langle(-\triangle)^{\alpha/2}u(x), \phi\rangle=\int_{R^n}
u(x)(-\triangle)^{\alpha/2}\phi(x)dx, \,\,\forall\phi\in C_0^\infty(R^n).
\end{eqnarray*}

In our paper, let
\begin{equation}\label{011801}
v(x)=|x|^{\beta-n}\ast u^p=\int_{R^n}\frac{u^p(y)}{|x-y|^{n-\beta}}dy,
\end{equation}
and act $(-\triangle)^{\beta/2}$ on both side of (\ref{011801}), we obtain
$$(-\triangle)^{\beta/2}v(x)=u^p(x).$$
Then, (\ref{1.1}) is equivalent to
\begin{eqnarray}\label{011802}
\left\{
\begin{array}{ll}
(-\triangle)^{\alpha/2}u=v(x)u^{p-1}(x),\,\,&x\in R^n,\\
(-\triangle)^{\beta/2}v=u^{p}(x),\,\,&x\in R^n,\\
u\geq0,v\geq 0,\,\,&x\in R^n.
\end{array}
\right.
\end{eqnarray}
Hence, to study (\ref{1.1}), it is sufficiently to investigate (\ref{011802}).

In recent years, the fractional Laplacian has attracted much attention. It appears in diverse physical phenomena, such as anomalous diffusion and quasi-geostrophic flows. It also has various applications in probability and finance. In particular, the fractional Laplacian can be understood as the infinitesimal generator of a stable L\'{e}vy diffusion process and appear in anomalous diffusions in plasmas, flames propagation and chemical reactions in liquids, population dynamics, geographical fluid dynamics, and American options in finance. For readers who are interested in the application of the fractional Laplacian, please refer to \cite{A}, \cite{B} and the references therein. 

In \cite{BCDS}, the authors considered the following fractional Laplacian equation
\begin{eqnarray}\label{021401}
	(-\triangle)^{\alpha/2}u=u^p,\,\, x\in R^n,
\end{eqnarray}
they used the extension method to deduce the nonlocal problem into a local one in a higher dimensional half space $R^n\times [0, \infty)$, then applied the method of moving planes to show the symmetry of $U(x,y)$ in $x$, then derived the nonexistence of positive solutions in the subcritical case. 

In \cite{CLL}, the authors developed a direct method of moving planes for the fractional Laplacian, and using this method, they derived the symmetry and nonexistence of positive solutions  for
\begin{eqnarray*}
\left\{
\begin{array}{ll}
&(-\triangle)^{\alpha/2}u=u^p,\\
&u\geq0,
\end{array}
\right.
\end{eqnarray*}
 in $R^n$ and $R^n_+$.

In \cite{LM}, the authors studied the system involving the fractional Laplacian
\begin{eqnarray*}
	\left\{
	\begin{array}{ll}
		(-\triangle)^{\alpha/2}u=f(v(x)),\,\,&x\in R^n,\\
		(-\triangle)^{\beta/2}v(x)=g(u(x)),\,\,&x\in R^n,\\
		u\geq0, v\geq0,\,\,&x\in R^n.
	\end{array}
	\right.
\end{eqnarray*}   
First, they used the iteration method to establish the maximum principles for system, then derived the symmetry of non-negative solutions by the direct method of moving planes without any decay assumption at infinity.

In our paper, we first establish the maximum principles by the iteration method introduced by \cite{LM}, and then used the direct method of moving planes
introduced by \cite{CLL} to derive the symmetry of positive solutions
and then deduce the nonexistence of positive solutions.

The following is our main theorems.

\begin{thm}(\textbf{Decay at Infinity})\label{t1}
 Let $\Omega$ be an unbounded region in $\Sigma_\lambda$. Assume $\varphi\in L_\alpha\cap C^{1,1}_{loc}(\Omega)$, $\phi\in L_\beta\cap C^{1,1}_{loc}(\Omega)$ and $\varphi(x), \phi(x)$ are lower semi-continuous. If
\begin{eqnarray}\label{1.3}
\left\{
\begin{array}{lll}
(-\triangle)^{\alpha/2}\varphi(x)+C_2(x)\varphi(x)+C_3(x)\phi(x)\geq 0 ~~~ ~~~&in& \Omega,\\
(-\triangle)^{\beta/2}\phi(x)+C_1(x)\varphi(x)\geq 0 ~~~ ~~~&\mbox{in}& \Omega,\\
\varphi(x),\phi(x)\geq0 ~~~ ~~~&\mbox{in}& \Sigma_\lambda\backslash\Omega,\\
\varphi(x^\lambda)=-\varphi(x)   ~~~ ~~~&\mbox{in}& \Sigma_\lambda,\\
\phi(x^\lambda)=-\phi(x)   ~~~ ~~~&\mbox{in}& \Sigma_\lambda,
\end{array}
\right.
\end{eqnarray}
with
\begin{eqnarray}\label{1.4}
C_1(x),C_3(x)\sim\frac{1}{|x|^{\alpha+\beta}}, \,\,C_2(x)\sim\frac{1}{|x|^{2\alpha}},\,\,\mbox{for} \,\,|x|\,\,\,\mbox{large},
\end{eqnarray}
and 
$$C_1(x), C_2(x), C_3(x)<0.$$
Then there exists a constant $R_0 > 0$ such that if
\begin{eqnarray}
\varphi(x_0) = \min_{\Omega}\varphi(x)<0,\,\,\phi(x_1)=\min_{\Omega}\phi(x)<0,
\end{eqnarray}
then at least one of $x_0$ and $x_1$ satisfies
\begin{eqnarray}\label{1.12}
|x|\leq R_0.
\end{eqnarray}
\end{thm}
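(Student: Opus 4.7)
The plan is to establish at each of the minimum points the standard ``antisymmetric'' upper bound $(-\triangle)^{\alpha/2}\varphi(x_0)\le C\varphi(x_0)/|x_0|^\alpha$ (and the analogue for $\phi$ at $x_1$), then feed these into the hypothesized inequalities and exploit the fact that the prescribed decay of $C_1,C_2,C_3$ at infinity is strictly faster than the $1/|x|^\alpha$, $1/|x|^\beta$ scaling of the fractional Laplacian, which will be incompatible with $|x_0|$ and $|x_1|$ both large.

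First I would prove the decay estimate. Split the singular integral defining $(-\triangle)^{\alpha/2}\varphi(x_0)$ into pieces over $\Sigma_\lambda$ and $\Sigma_\lambda^c$, change variables $y\mapsto y^\lambda$ in the second piece, and use $\varphi(y^\lambda)=-\varphi(y)$ to get
\begin{equation*}
(-\triangle)^{\alpha/2}\varphi(x_0)=C_{n,\alpha}\int_{\Sigma_\lambda}\left[\frac{\varphi(x_0)-\varphi(y)}{|x_0-y|^{n+\alpha}}+\frac{\varphi(x_0)+\varphi(y)}{|x_0-y^\lambda|^{n+\alpha}}\right]dy.
\end{equation*}
For $x_0,y\in\Sigma_\lambda$ one has $|x_0-y|\le|x_0-y^\lambda|$, and $\varphi(x_0)\le\varphi(y)$ throughout $\Sigma_\lambda$ (because $x_0$ attains the minimum on $\Omega$ and $\varphi\ge 0$ on $\Sigma_\lambda\setminus\Omega$), so the bracketed integrand is bounded above by $2\varphi(x_0)/|x_0-y^\lambda|^{n+\alpha}$. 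A ball comparison inside $\Sigma_\lambda^c$ (a ball of radius $\sim|x_0|$ at distance $\lesssim|x_0|$ from $x_0$) then gives $(-\triangle)^{\alpha/2}\varphi(x_0)\le C\varphi(x_0)/|x_0|^\alpha$ for $|x_0|$ large, and the same argument at $x_1$ produces $(-\triangle)^{\beta/2}\phi(x_1)\le C\phi(x_1)/|x_1|^\beta$.

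Feeding these into the hypotheses yields $C\varphi(x_0)/|x_0|^\alpha\ge -C_2(x_0)\varphi(x_0)-C_3(x_0)\phi(x_0)$ and $C\phi(x_1)/|x_1|^\beta\ge -C_1(x_1)\varphi(x_1)$. If $\phi(x_0)\ge 0$, the $C_3$-term is nonnegative (since $C_3<0$), so dividing by $\varphi(x_0)<0$ gives $C/|x_0|^\alpha\le|C_2(x_0)|\sim|x_0|^{-2\alpha}$ and hence $|x_0|$ is bounded; and $\varphi(x_1)\ge 0$ contradicts the second relation outright since $\phi(x_1)<0$. In the residual case $\phi(x_0),\varphi(x_1)<0$, the minimality of $x_0,x_1$ over $\Omega$ gives $|\phi(x_0)|\le|\phi(x_1)|$ and $|\varphi(x_1)|\le|\varphi(x_0)|$; taking absolute values, absorbing the subordinate $|C_2|$-term into the dominant $C|x_0|^{-\alpha}$, and invoking the decay of $|C_1|$ and $|C_3|$ transforms the two relations into
\begin{equation*}
|\varphi(x_0)|\le\frac{C'|\phi(x_1)|}{|x_0|^{\beta}},\qquad |\phi(x_1)|\le\frac{C'|\varphi(x_0)|}{|x_1|^{\alpha}}.
\end{equation*}
Multiplying and cancelling the positive factor $|\varphi(x_0)|\,|\phi(x_1)|$ leaves $|x_0|^\beta|x_1|^\alpha\le (C')^2$, so choosing $R_0$ sufficiently large makes both $|x_0|,|x_1|>R_0$ impossible.

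The main obstacle is the first step. Because $x_0$ is only a minimum on $\Omega$ and not on all of $\Sigma_\lambda$, the antisymmetry of $\varphi$ and its nonnegativity on $\Sigma_\lambda\setminus\Omega$ must be invoked in tandem to guarantee $\varphi(x_0)\le\varphi(y)$ throughout $\Sigma_\lambda$, and the ball comparison producing the $1/|x_0|^\alpha$ factor has to be carried out with a constant that does not deteriorate as $x_0$ ranges over the unbounded set $\Omega$. Once this quantitative decay is secured, the remainder of the argument is bookkeeping driven by the prescribed decay exponents of the coefficients.
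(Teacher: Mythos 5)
Your proposal is correct and follows essentially the same route as the paper: establish $(-\triangle)^{\alpha/2}\varphi(x_0)\le C\varphi(x_0)/|x_0|^\alpha$ (and the $\beta$-analogue at $x_1$) by the antisymmetric split and ball comparison, then chain the two coupled inequalities through the decay rates of $C_1,C_2,C_3$. The only difference is bookkeeping — the paper substitutes the $x_1$-inequality into the $x_0$-inequality to reach one signed contradiction, while you split on the sign of $\phi(x_0)$ and multiply the two cleaned-up inequalities; these are equivalent.
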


\begin{thm}(\textbf{Narrow Region Principle})\label{t2}
 Let $\Omega$ be a bounded narrow region in $\Sigma_\lambda$, such that it is contained in $\{x|\lambda-\delta<x_1<\lambda\}$ with small $l$. Suppose that $\varphi(x)\in L_\alpha\cap C^{1,1}_{loc}(\Omega)$, $\phi(x)\in L_\beta\cap C^{1,1}_{loc}(\Omega)$ and $\varphi(x), \phi(x)$ are lower semi-continuous. If $C_1(x)$, $C_2(x)$ and $C_3(x)$ are bounded from below in $\Omega$, then
\begin{eqnarray}\label{1.5}
\left\{
\begin{array}{lll}
(-\triangle)^{\alpha/2}\varphi(x)+C_2(x)\varphi(x)+C_3(x)\phi(x)\geq 0 ~~~ ~~~&in& \Omega,\\
(-\triangle)^{\beta/2}\phi(x)+C_1(x)\varphi(x)\geq 0 ~~~ ~~~&in& \Omega,\\
\varphi(x),\phi(x)\geq0 ~~~ ~~~&\mbox{in}& \Sigma_\lambda\backslash\Omega,\\
\varphi(x^\lambda)=-\varphi(x)   ~~~ ~~~&\mbox{in}& \Sigma_\lambda,\\
\phi(x^\lambda)=-\phi(x)   ~~~ ~~~&\mbox{in}& \Sigma_\lambda,
\end{array}
\right.
\end{eqnarray}
then for sufficiently small $\delta$, we have
\begin{eqnarray}\label{2017010901}
\varphi(x),\phi(x) \geq 0,\,x\in\Omega.
\end{eqnarray}
Furthermore, if $\varphi = 0$ or $\phi(x)=0$ at some point in $\Omega$, then
\begin{eqnarray}\label{2017010902}
\varphi(x)=\phi(x) \equiv 0~~~\text{almost everywhere in}~~~R^n.
\end{eqnarray}
These conclusions hold for unbounded region $\Omega$ if we further
assume that
\begin{eqnarray}\label{2017010903}
\underline{\lim\limits}_{|x|\rightarrow\infty}\varphi(x),\phi(x)\geq0.
\end{eqnarray}
\end{thm}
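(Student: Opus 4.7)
The plan is to prove the non-negativity \eqref{2017010901} by contradiction, exploiting the anti-symmetry of $\varphi$ and $\phi$ to obtain an upper bound of order $\delta^{-\alpha}$ (resp.\ $\delta^{-\beta}$) on the fractional Laplacian at any negative minimum, then playing this gain against the lower bounds on $C_1,C_2,C_3$. By lower semi-continuity together with $\varphi,\phi\geq 0$ on $\Sigma_\lambda\setminus\Omega$ (and the liminf condition \eqref{2017010903} when $\Omega$ is unbounded), I can select minimizers $\bar x, \tilde x \in \overline{\Omega}$ of $\varphi$ and $\phi$ over $\Sigma_\lambda$, and assume for contradiction that $\varphi(\bar x)<0$ or $\phi(\tilde x)<0$.

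The key estimate is obtained by splitting the defining integral of $(-\triangle)^{\alpha/2}\varphi(\bar x)$ over $\Sigma_\lambda$ and its reflection and using $\varphi(y^\lambda)=-\varphi(y)$:
$$(-\triangle)^{\alpha/2}\varphi(\bar x) = C_{n,\alpha}\int_{\Sigma_\lambda}\left[\frac{\varphi(\bar x)-\varphi(y)}{|\bar x-y|^{n+\alpha}} + \frac{\varphi(\bar x)+\varphi(y)}{|\bar x-y^\lambda|^{n+\alpha}}\right]dy.$$
Since $|\bar x-y|\leq|\bar x-y^\lambda|$ for $y\in\Sigma_\lambda$ and $\varphi(\bar x)-\varphi(y)\leq 0$ at the minimizer, replacing $|\bar x-y|$ by $|\bar x-y^\lambda|$ in the first term yields $(-\triangle)^{\alpha/2}\varphi(\bar x) \leq 2C_{n,\alpha}\,\varphi(\bar x)\int_{\Sigma_\lambda}|\bar x-y^\lambda|^{-(n+\alpha)}\,dy$. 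Because $\bar x^\lambda$ sits within distance $\delta$ of $T_\lambda$, an annulus $B_{2\delta}(\bar x^\lambda)\setminus B_\delta(\bar x^\lambda)$ keeps a fixed fraction of its volume inside $\Sigma_\lambda$, so the integral is bounded below by $c\,\delta^{-\alpha}$. An identical argument at $\tilde x$ gives $(-\triangle)^{\beta/2}\phi(\tilde x)\leq c'\,\delta^{-\beta}\,\phi(\tilde x)$.

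Feeding these bounds into \eqref{1.5} produces
$$\big(c\,\delta^{-\alpha}+C_2(\bar x)\big)\varphi(\bar x) \geq -C_3(\bar x)\,\phi(\bar x), \qquad c'\,\delta^{-\beta}\,\phi(\tilde x) \geq -C_1(\tilde x)\,\varphi(\tilde x).$$
With $C_i\geq -M$ and the orderings $\phi(\bar x)\geq\phi(\tilde x)$, $\varphi(\tilde x)\geq\varphi(\bar x)$, the cross-terms $-C_3\phi(\bar x)$ and $-C_1\varphi(\tilde x)$ are each bounded below by $M\cdot\min(\varphi(\bar x),\phi(\tilde x))$, so (assuming WLOG $\varphi(\bar x)\leq\phi(\tilde x)$) the first line collapses to roughly $(c\,\delta^{-\alpha}-2M)\varphi(\bar x)\geq 0$; once $\delta$ is small enough that $c\,\delta^{-\alpha}>2M$, this forces $\varphi(\bar x)\geq 0$, contradicting the assumption and proving \eqref{2017010901}. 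For the strong alternative \eqref{2017010902}, suppose $\varphi(x^*)=0$ at some $x^*\in\Omega$; since $\varphi\geq 0$ on $\Sigma_\lambda$ from the first part, $x^*$ is a global minimizer and
$$(-\triangle)^{\alpha/2}\varphi(x^*) = -C_{n,\alpha}\int_{\Sigma_\lambda}\varphi(y)\left[\frac{1}{|x^*-y|^{n+\alpha}}-\frac{1}{|x^*-y^\lambda|^{n+\alpha}}\right]dy \leq 0,$$
with equality only if $\varphi\equiv 0$ a.e.\ on $\Sigma_\lambda$. Plugging $\varphi(x^*)=0$ into the first line of \eqref{1.5} and using the applicable sign $C_3(x^*)\leq 0$ of the cross-coupling forces this equality, hence $\varphi\equiv 0$ a.e.\ on $\Sigma_\lambda$ and on all of $R^n$ by antisymmetry; feeding $\varphi\equiv 0$ into the second inequality and repeating the same minimum argument for $\phi$ then yields $\phi\equiv 0$ a.e.

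The main obstacle is the contradiction step. Unlike the scalar narrow region principle, the minima of $\varphi$ and $\phi$ may occur at different points, so two separate pointwise inequalities must be combined while absorbing the coupling terms $C_3\phi$ and $C_1\varphi$. Working with only lower bounds on the $C_i$'s (rather than two-sided bounds) requires singling out the more negative of $\varphi(\bar x),\phi(\tilde x)$ and channelling all perturbations through the dominant gain $\delta^{-\min(\alpha,\beta)}$; this sign-tracking bookkeeping—together with the localization provided by \eqref{2017010903} in the unbounded case—is the delicate part of the argument.
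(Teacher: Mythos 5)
Your proposal follows essentially the same approach as the paper's proof: antisymmetric decomposition of $(-\triangle)^{\alpha/2}$ (resp.\ $(-\triangle)^{\beta/2}$) over $\Sigma_\lambda$ and its reflection, a narrow-region lower bound $\int_{\Sigma_\lambda}|\bar x-y^\lambda|^{-(n+\alpha)}\,dy\gtrsim\delta^{-\alpha}$, and a two-point contradiction argument that chains the two coupled inequalities between the negative minima of $\varphi$ and $\phi$. Your bookkeeping via $\min(\varphi(\bar x),\phi(\tilde x))$ and a WLOG reduction is a streamlined version of the paper's explicit substitution $\phi(x_1)\gtrsim\delta^\beta\varphi(x_1)$ followed by factoring out $\varphi(x_2)\delta^{-\alpha}$, and the second part at a zero-minimum of $\varphi$ (or $\phi$) is the same computation. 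One point worth flagging: both your argument and the paper's silently require $C_1,C_3\leq 0$ in $\Omega$ (so that replacing $\phi(\bar x)$ by $\phi(\tilde x)$, and bounding $-C_3\phi$ from below by $M\phi(\tilde x)$, go the right way); the statement of the theorem only asserts that $C_1,C_2,C_3$ are bounded from below, so this sign condition is an unstated hypothesis inherited from the intended application. You do invoke it explicitly in the $\varphi(x^*)=0$ step, but it is also needed in the contradiction step for \eqref{2017010901}.
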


\begin{thm}
	Let $0<\alpha,\beta<2$, $\frac{n}{n-\alpha}\leq p\leq\frac{n+\beta}{n-\alpha}$. Assume $u\in L_\alpha\cap C^{1,1}_{loc}$ and $v\in L_\beta\cap C^{1,1}_{loc}$ satisfy (\ref{1.1}). Then, 
	  
	(i) in the subcritical case $\frac{n}{n-\alpha}\leq p<\frac{n+\beta}{n-\alpha}$, (\ref{1.1}) has no positive solution;
	
   (ii) in the critical case $p=\frac{n+\beta}{n-\alpha}$, the positive solutions must be radially symmetric and monotone decreasing about some point in $R$.
	
\end{thm}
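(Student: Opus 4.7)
My plan is to apply the direct method of moving planes to the equivalent system (\ref{011802}), with Theorems \ref{t1} and \ref{t2} playing the roles of the two maximum principles needed to start and then extend the planes. For each $\lambda\in R$ I would set $T_\lambda=\{x_1=\lambda\}$, $\Sigma_\lambda=\{x_1<\lambda\}$, $x^\lambda=(2\lambda-x_1,x_2,\dots,x_n)$, $u_\lambda(x)=u(x^\lambda)$, $v_\lambda(x)=v(x^\lambda)$, and the antisymmetric differences $U_\lambda=u_\lambda-u$, $V_\lambda=v_\lambda-v$. A mean value computation on (\ref{011802}) gives, on the subset of $\Sigma_\lambda$ where $(U_\lambda,V_\lambda)$ is not non-negative, a coupled system of exactly the shape required by (\ref{1.3}) and (\ref{1.5}), with $C_1=-p\,\eta_\lambda^{p-1}$, $C_2=-(p-1)\,v\,\xi_\lambda^{p-2}$, and $C_3=-u_\lambda^{p-1}$ for intermediate values $\xi_\lambda,\eta_\lambda$ between $u_\lambda$ and $u$; all three coefficients are negative, as required.

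To meet the decay hypothesis (\ref{1.4}) I would first apply the Kelvin transform $\bar u(x)=|x|^{\alpha-n}u(x/|x|^2)$, $\bar v(x)=|x|^{\beta-n}v(x/|x|^2)$, which turns (\ref{011802}) into an analogous system on $R^n\setminus\{0\}$ carrying an extra weight $|x|^\tau$, $|x|^\sigma$ on its two right-hand sides, where $\tau=(n-\alpha)(p-1)-\alpha-\beta$ and $\sigma=p(n-\alpha)-n-\beta$; both vanish at the critical exponent and are strictly negative in the subcritical range. Since $\bar u,\bar v$ decay at infinity like $|x|^{\alpha-n},|x|^{\beta-n}$, a short computation shows that the analogues of $C_1,C_3$ for $(\bar u,\bar v)$ behave like $|x|^{-(\alpha+\beta)}$ and the analogue of $C_2$ like $|x|^{-2\alpha}$ at infinity, matching (\ref{1.4}) exactly.

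The moving planes then proceed in two standard steps. For $\lambda\ll 0$, Theorem \ref{t1} confines any negative minimum of $\bar U_\lambda,\bar V_\lambda$ to a fixed ball $\{|x|\leq R_0\}$; since $\Sigma_\lambda$ eventually leaves this ball, $\bar U_\lambda,\bar V_\lambda\geq 0$ on $\Sigma_\lambda$. I then set $\lambda_0=\sup\{\lambda:\bar U_\mu,\bar V_\mu\geq 0 \text{ in }\Sigma_\mu\text{ for all }\mu\leq\lambda\}$. If at $\lambda_0$ the pair $(\bar U_{\lambda_0},\bar V_{\lambda_0})$ is not identically zero, then Theorem \ref{t2} on a thin strip $\{\lambda_0<x_1<\lambda_0+\delta\}$ combined with Theorem \ref{t1} outside a large ball pushes the plane strictly past $\lambda_0$, contradicting maximality. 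Hence $\bar U_{\lambda_0}\equiv\bar V_{\lambda_0}\equiv 0$, so $(\bar u,\bar v)$ is reflection-symmetric about $T_{\lambda_0}$.

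In the critical case $\sigma=\tau=0$, so the transformed system is translation invariant; repeating the argument in every coordinate direction gives radial monotonicity of $(\bar u,\bar v)$, hence of $(u,v)$, about some point, proving (ii). In the subcritical case the weights $|x|^\sigma,|x|^\tau$ are non-constant, so reflection symmetry of the equations about $T_\lambda$ is only compatible with $\lambda$ equal to the first coordinate of the Kelvin center; performing the argument around every choice of Kelvin center then forces $u$ to be radially symmetric about every point of $R^n$, hence constant, which together with the equation and the positivity of $u$ forces $u\equiv 0$, yielding nonexistence and proving (i). The main technical obstacle I expect is the rigorous verification that the coefficients computed in the Kelvin picture genuinely satisfy the precise asymptotics of (\ref{1.4}) and the uniform-in-$\lambda$ lower bounds required by Theorem \ref{t2}, together with the careful handling of the origin singularity of $(\bar u,\bar v)$ as the moving plane approaches $x_1=0$.
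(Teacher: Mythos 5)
Your proposal follows essentially the same route as the paper: Kelvin transform to restore decay, mean-value linearization producing coefficients $C_1,C_3\sim|x|^{-(\alpha+\beta)}$ and $C_2\sim|x|^{-2\alpha}$ (all negative), Theorem \ref{t1} to start the plane, Theorem \ref{t2} plus Theorem \ref{t1} to push to the limiting position $\lambda_0$, and the same dichotomy between the critical case (weight $|x|^{-\gamma}$ becomes constant, giving symmetry) and the subcritical case (non-constant weight forces either $\lambda_0$ at the Kelvin center — and then vary the center to get constancy — or triviality of $(\bar u,\bar v)$). The technical obstacles you flag at the end, namely uniform behaviour of $(U_\lambda,V_\lambda)$ near the Kelvin singularity $0^\lambda$ and the precise contradiction that lets the plane move strictly past $\lambda_0$, are exactly what the paper dispatches in its Appendix (Lemmas \ref{lem5.1}, \ref{5.3}, \ref{lem5.2}), so your sketch is honest about where the remaining work lies.
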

\section{Proof of Theorem \ref{t1} and \ref{t2}}
We first give some basic notations. Let
\begin{eqnarray*}
	T_\lambda=\{x\in R^n\mid x_1=\lambda,\lambda\in R\}
\end{eqnarray*}
be the moving plane,
\begin{eqnarray*}
	\Sigma_\lambda=\{x\in R^n\mid x_1<\lambda\}
\end{eqnarray*}
be the region to the left of the plane, $\tilde{\Sigma}_{\lambda}=R^n\backslash\Sigma_{\lambda}$, and
\begin{eqnarray*}
	x^\lambda=(2\lambda-x_1,x_2,...,x_n)
\end{eqnarray*}
be the reflection of the point $x = (x_1, x_2, \cdot\cdot\cdot ,
x_n)$ about the plane $\mathrm{T}_\lambda$.
\subsection{Decay at Infinity}
\textbf{Proof}. 
By the definition of the fractional Laplacian (\ref{1.2}),
\begin{eqnarray}\label{1.6}\nonumber
(-\triangle)^{\beta/2}\phi(x_1)&=&C_{n,\beta}PV\int_{R^n}\frac{\phi(x_1)-\phi(y)}{|x_1-y|^{n+\beta}}dy\\ \nonumber
&=&C_{n,\beta}PV\int_{\Sigma_\lambda}\frac{\phi(x_1)-\phi(y)}{|x_1-y|^{n+\beta}}dy+C_{n,\beta}PV\int_{\tilde{\Sigma}_{\lambda}}\frac{\phi(x_1)-\phi(y)}{|x_1-y|^{n+\beta}}dy\\ \nonumber
&=&C_{n,\beta}PV\int_{\Sigma_\lambda}\frac{\phi(x_1)-\phi(y)}{|x_1-y|^{n+\beta}}dy+C_{n,\beta}PV\int_{\Sigma_\lambda}\frac{\phi(x_1)-\phi(y^\lambda)}{|x-y^\lambda|^{n+\beta}}dy\\ \nonumber
&=&C_{n,\beta}PV\int_{\Sigma_\lambda}\frac{\phi(x_1)-\phi(y)}{|x_1-y|^{n+\beta}}dy+C_{n,\beta}PV\int_{\Sigma_\lambda}\frac{\phi(x_1)+\phi(y)}{|x_1-y^\lambda|^{n+\beta}}dy\\ \nonumber
&=&C_{n,\beta}PV\int_{\Sigma_\lambda}\frac{\phi(x_1)-\phi(y)}{|x_1-y|^{n+\beta}}dy+C_{n,\beta}PV\int_{\Sigma_\lambda}\frac{\phi(x_1)+\phi(y)}{|x_1-y^\lambda|^{n+\beta}}dy\\ \nonumber
&\leq&C_{n,\beta}PV\int_{\Sigma_\lambda}\frac{\phi(x_1)-\phi(y)}{|x_1-y^\lambda|^{n+\beta}}dy+C_{n,\beta}PV\int_{\Sigma_\lambda}\frac{\phi(x_1)+\phi(y)}{|x_1-y^\lambda|^{n+\beta}}dy\\ \nonumber
&\leq&C_{n,\beta}\int_{\Sigma_\lambda}\frac{2\phi(x_1)}{|x_1-y^\lambda|^{n+\beta}}dy.
\end{eqnarray}
Fix $\lambda$, from the fact $x_1\in\Sigma_\lambda$ and $|x_1|$ sufficiently large,
\begin{eqnarray}\label{1.7}\nonumber
\int_{\sum_\lambda}
\frac{1}{|x_1-y^\lambda|^{n+\beta}}dy&\geq&\int_{\{x_1\geq0\}}
\frac{1}{|x_1-y^\lambda|^{n+\beta}}dy\\ \nonumber
&=&\frac{1}{2}\int_{R^n}\frac{1}{(|x_1|+|y^\lambda|)^{n+\beta}}dy\\
\nonumber
&=&\frac{1}{2}\int_{0}^{\infty}\int_{B_r^0}\frac{1}{(|x_1|+|r|)^{n+\beta}}d\sigma
dr\\ \nonumber
&=&\frac{1}{2}\int_0^{\infty}\frac{w_{n-1}r^{n-1}}{(|x_1|+|r|)^{n+\beta}}dr\\
\nonumber
&=&\frac{w_{n-1}}{2|x_1|^\beta}\int_0^{\infty}\frac{t^{n-1}}{(1+t)^{n+\beta}}dr,           r=t|x_2|\\
&\sim&\frac{C}{|x_1|^\beta}.
\end{eqnarray}
Then,
\begin{eqnarray}\label{1.8}
(-\triangle)^{\beta/2}\phi(x_1)\leq\frac{C}{|x_1|^\beta}\phi(x_1)<0.
\end{eqnarray}
Combining this with (\ref{1.3}), we can show
\begin{eqnarray}\label{1.9}
\varphi(x_1)<0,
\end{eqnarray}
and
\begin{eqnarray}\label{1.10}
\phi(x_1)\geq -CC_1|x_1|^\beta\varphi(x_1).
\end{eqnarray}
We know there exists $x_0$ such that
$$\varphi(x_0)=\min_\Omega\varphi(x)<0.$$
From a similar argument as in (\ref{1.6}), we can show
\begin{eqnarray}\label{1.11}
(-\triangle)^{\alpha/2}\varphi(x_0)\leq\frac{C}{|x_0|^\alpha}\varphi(x_0).
\end{eqnarray}
From (\ref{1.3}) and (\ref{1.10}), we can deduce 
\begin{eqnarray*}
	0&\leq&(-\triangle)^{\alpha/2}\varphi(x_0)+C_2(x_0)\varphi(x_0)+C_3(x_0)\phi(x_0)\\
	&\leq&\frac{C}{|x_0|^\alpha}\varphi(x_0)+C_2(x_0)\varphi(x_0)+C_3(x_0)\phi(x_1)\\
	&\leq&\frac{C}{|x_0|^\alpha}\varphi(x_0)+C_2(x_0)\varphi(x_0)-CC_3(x_0)C_1(x_1)|x_1|^\beta\varphi(x_1)\\
	&<&0.
\end{eqnarray*}
The last inequality follows from assumptions (\ref{1.4}). This is a contradiction. Then (\ref{1.12}) must be true for at least one of $x_0$ and $x_1$.
\subsection{Narrow Region Principle}
\textbf{Proof}. If (\ref{2017010901}) does not hold, because $\phi(x)$ is lower semi-continuous, there exists $x_1\in\bar{\Omega}$ such that
\begin{eqnarray*}
	\phi(x_1)=\min_{\bar{\Omega}}\phi(x)<0.
\end{eqnarray*}
By the definition of $(-\triangle)^{\beta/2}$, we have
\begin{eqnarray}\label{010904}\nonumber
(-\triangle)^{\beta/2}\phi(x_1)&=&C_{n,\beta}PV\int_{R^n}\frac{\phi(x_1)-\phi(y)}{|x_1-y|^{n+\beta}}dy\\ \nonumber
&=&C_{n,\beta}PV\int_{\Sigma_\lambda}\frac{\phi(x_1)-\phi(y)}{|x_1-y|^{n+\beta}}dy+C_{n,\beta}PV\int_{\tilde{\Sigma}_{\lambda}}\frac{\phi(x_1)-\phi(y)}{|x_1-y|^{n+\beta}}dy\\ \nonumber
&=&C_{n,\beta}PV\int_{\Sigma_\lambda}\frac{\phi(x_1)-\phi(y)}{|x_1-y|^{n+\beta}}dy+C_{n,\beta}PV\int_{\Sigma_\lambda}\frac{\phi(x_1)-\phi(y^\lambda)}{|x-y^\lambda|^{n+\beta}}dy\\ \nonumber
&=&C_{n,\beta}PV\int_{\Sigma_\lambda}\frac{\phi(x_1)-\phi(y)}{|x_1-y|^{n+\beta}}dy+C_{n,\beta}PV\int_{\Sigma_\lambda}\frac{\phi(x_1)+\phi(y)}{|x_1-y^\lambda|^{n+\beta}}dy\\ \nonumber
&=&C_{n,\beta}PV\int_{\Sigma_\lambda}\frac{\phi(x_1)-\phi(y)}{|x_1-y|^{n+\beta}}dy+C_{n,\beta}PV\int_{\Sigma_\lambda}\frac{\phi(x_1)+\phi(y)}{|x_1-y^\lambda|^{n+\beta}}dy\\ \nonumber
&\leq&C_{n,\beta}PV\int_{\Sigma_\lambda}\frac{\phi(x_1)-\phi(y)}{|x_1-y^\lambda|^{n+\beta}}dy+C_{n,\beta}PV\int_{\Sigma_\lambda}\frac{\phi(x_1)+\phi(y)}{|x_1-y^\lambda|^{n+\beta}}dy\\ \nonumber
&\leq&C_{n,\beta}\int_{\Sigma_\lambda}\frac{2\phi(x_1)}{|x_1-y^\lambda|^{n+\beta}}dy.
\end{eqnarray}
Let $D=B_{2\delta}(x_1)\cap\tilde{\Sigma}_\lambda$, then
\begin{eqnarray}\label{010902}\nonumber
&&\int_{\Sigma_\lambda}\frac{1}{|x_1-y^\lambda|^{n+\beta}}dy\\ \nonumber
&\geq&\int_{D}\frac{1)}{|x_1-y|^{n+\beta}}dy\\ \nonumber
&\geq&\frac{1}{10}\int_{B_{2\delta}(x_1)}\int_{D}\frac{1)}{|x_1-y|^{n+\beta}}dy\\
&\sim&\frac{C}{\delta^\beta}.
\end{eqnarray}
Thus,
\begin{eqnarray}
(-\triangle)^{\beta/2}\phi(x_1)\leq \frac{C\phi(x_1)}{\delta^\beta}<0.
\end{eqnarray}
Combining this with (\ref{1.5}), we have
\begin{eqnarray}
-C_1(x_1)\varphi(x_1)\leq \frac{C\phi(x_1)}{\delta^\beta}.
\end{eqnarray}
We know there exists a $x_2$ such that 
$$\varphi(x_2)=\min_{\bar{\Omega}}\varphi(x)<0.$$
Similar to (\ref{010902}), we can derive that
\begin{eqnarray*}
	(-\triangle)^{\alpha/2}\varphi(x_2)\leq\frac{C\varphi(x_2)}{\delta^\alpha}<0.
\end{eqnarray*}
By (\ref{1.5}), for $\delta$ sufficiently small, we have
\begin{eqnarray*}
	0&\leq&(-\triangle)^{\alpha/2}\varphi(x_2)+C_2(x_2)\varphi(x_2)+C_3(x_2)\phi(x_2)\\
	&\leq&\frac{C\varphi(x_2)}{\delta^\alpha}+C_2(x_2)\varphi(x_2)+C_3(x_2)\phi(x_1)\\
&\leq&\frac{C\varphi(x_2)}{\delta^\alpha}+C_2(x_2)\varphi(x_2)-C_3(x_2)C_1(x_1)C\delta^\beta\phi(x_1)\\
&\leq&\frac{C\varphi(x_2)}{\delta^\alpha}+C_2(x_2)\varphi(x_2)-C_3(x_2)C_1(x_1)\delta^\beta C\phi(x_2)\\
&=&\frac{C\varphi(x_2)}{\delta^\alpha}(1+\frac{C_2(x_2)\delta^\alpha}{C}-C_3(x_2)C_1(x_1)\delta^\alpha\delta^\beta)\\
&<&0,
\end{eqnarray*}
which is a contradiction.
To prove (\ref{2017010902}), we suppose there exists $\bar{x}\in \Omega$ such that 
$$\phi(\bar{x})=0.$$
Then,
\begin{eqnarray}\nonumber
(-\triangle)^{\beta/2}\phi(\bar{x})&=&C_{n,\beta}PV\int_{R^n}\frac{\phi(\bar{x})-\phi(y)}{|\bar{x}-y|^{n+\beta}}dy\\ \nonumber
&=&C_{n,\beta}PV\int_{\Sigma_\lambda}\frac{-\phi(y)}{|\bar{x}-y|^{n+\beta}}dy+C_{n,\beta}PV\int_{\tilde{\Sigma}_{\lambda}}\frac{-\phi(y)}{|\bar{x}-y|^{n+\beta}}dy\\ \nonumber
&=&C_{n,\beta}PV\int_{\Sigma_\lambda}\frac{-\phi(y)}{|\bar{x}-y|^{n+\beta}}dy+C_{n,\beta}PV\int_{\Sigma_\lambda}\frac{-\phi(y^\lambda)}{|x-y^\lambda|^{n+\beta}}dy\\ \nonumber
&=&C_{n,\beta}PV\int_{\Sigma_\lambda}\frac{-\phi(y)}{|\bar{x}-y|^{n+\beta}}dy+C_{n,\beta}PV\int_{\Sigma_\lambda}\frac{\phi(y)}{|\bar{x}-y^\lambda|^{n+\beta}}dy\\ 
&=&C_{n,\beta}PV\int_{\Sigma_\lambda}\big(\frac{1}{|\bar{x}-y^\lambda|^{n+\beta}}-\frac{1}{|\bar{x}-y|^{n+\beta}}\big)\phi(y)dy. \label{011701}
\end{eqnarray}
If $\phi(x)\neq 0$, (\ref{011701}) implies that
$$(-\triangle)^{\beta/2}\phi(\bar{x})<0.$$
Combining this with (\ref{1.5}), we can derive 
$$\varphi(\bar{x})<0,$$
which is a contradiction with (\ref{2017010901}). Therefore $\phi(x)$ is identically $0$ in $\Sigma_\lambda$. Since 
$$\phi(x^\lambda)=-\phi(x),\,\,x\in\Sigma_\lambda,$$
it shows that
$$\phi(x)=0,\,\,x\in R^n,$$
then
$$(-\triangle)^{\beta/2}\phi(x)=0,\,\,x\in R^n.$$
From (\ref{1.5}), we know
$$\varphi(x)\leq0,\,\,x\in \Sigma_\lambda.$$
We already proved 
$$\varphi(x)\geq0,\,\,x\in \Sigma_\lambda.$$
It must hold
$$\varphi(x)=0,\,\,x\in \Sigma_\lambda.$$
Combining this with the fact 
$$\varphi(x^\lambda)=-\varphi(x),\,\,x\in\Sigma_\lambda,$$
we have
$$\varphi(x)\equiv 0,\,\,x\in R^n.$$
From a similar argument, we can show if $\varphi(x)$ is $0$ at one point in $\Sigma_\lambda$, then $\phi(x)$ and $\varphi(x)$ are identically 0 in $R^n$.

\section{The Symmetry of Positive Solutions}
Without any decay conditions on $u$ and $v$, we are not able to carry the method of moving planes on $u$ and $v$
directly. To circumvent this difficulty, we make a Kelvin transform.
For any $x_0\in R^n$, let
\begin{eqnarray*}
&\overline{u}(x)=\frac{1}{|x-x_0|^{n-\alpha}}u(\frac{x-x_0}{|x-x_0|^2}+x_0),\\
&\overline{v}(x)=\frac{1}{|x-x_0|^{n-\beta}}v(\frac{x-x_0}{|x-x_0|^2}+x_0).
\end{eqnarray*}
Without loss of generality, let $x_0=0$, then
\begin{eqnarray*}
	&\overline{u}(x)=\frac{1}{|x|^{n-\alpha}}u(\frac{x}{|x|^2}),\\
	&\overline{v}(x)=\frac{1}{|x|^{n-\beta}}v(\frac{x}{|x|^2}).
\end{eqnarray*}
Thus,
\begin{eqnarray*}
(-\triangle)^{\alpha/2}\overline{u}(x)&=&\frac{1}{|x|^{n+\alpha}}(-\triangle)^{\alpha/2}u(\frac{x}{|x|^2}),\\
&=&\frac{1}{|x|^{n+\alpha}}v(\frac{x}{|x|^2})u^{p-1}(\frac{x}{|x|^2}),\\
&=&\frac{1}{|x|^{\alpha+\beta-(p-1)(n-\alpha)}}\overline{v}(x)\overline{u}^{p-1}(x).
\end{eqnarray*}
  In a similar way, we have
\begin{eqnarray*}
(-\triangle)^{\beta/2}\overline{v}(x)=\frac{1}{|x|^{\alpha+\beta-(p-1)(n-\alpha)}}\overline{u}^p(x).
\end{eqnarray*}
Let $\gamma=\alpha+\beta-(p-1)(n-\alpha)$, and $(\ref{011802})$ becomes
\begin{eqnarray}\label{011803}
\left\{
\begin{array}{ll}
 (-\triangle)^{\alpha/2}\overline{u}(x)=|x|^{-\gamma}\overline{v}(x)\overline{u}^{p-1}(x),\,\,&x\in R^n,\\
 (-\triangle)^{\beta/2}\overline{v}=|x|^{-\gamma}\overline{u}^p(x),\,\,&x\in R^n,\\
 \overline{u}\geq0, \overline{v}\geq 0,\,\,&x\in R^n.
 \end{array}
 \right.
\end{eqnarray}

We first give some basic notations before starting moving the planes. Then we start moving planes on system $(\ref{011802})$.

Let
\begin{eqnarray*}
T_\lambda=\{x\in R^n\mid x_1=\lambda,\lambda\in R\}
\end{eqnarray*}
be the moving plane,
\begin{eqnarray*}
\Sigma_\lambda=\{x\in R^n\mid x_1<\lambda\}
\end{eqnarray*}
be the region to the left of the plane, and
\begin{eqnarray*}
x^\lambda=(2\lambda-x_1,x_2,...,x_n)
\end{eqnarray*}
be the reflection of the point $x = (x_1, x_2, \cdot\cdot\cdot ,
x_n)$ about the plane $\mathrm{T}_\lambda$.

Assume that $(\overline{u},\overline{v})$ solves the
fractional system $(\ref{011802})$. To compare the values of
$\overline{u}(x)$ with $\overline{u}(x^\lambda)$ and
$\overline{v}(x)$ with $\overline{v}(x^\lambda)$, we denote
\begin{eqnarray*}
\left\{
\begin{array}{ll}
U_\lambda(x) = \overline{u}(x^\lambda)-\overline{u}(x),\\
V_\lambda(x) = \overline{v}(x^\lambda)-\overline{v}(x).\\
\end{array}
 \right.
\end{eqnarray*}
Then system $(\ref{011802})$ becomes

\begin{eqnarray}\label{011804}
\left\{
\begin{array}{ll}
(-\triangle)^{\alpha/2} U_{\lambda}(x)=\frac{1}{|x^\lambda|^{\gamma}}\overline{v}(x^\lambda)\overline{u}^{p-1}(x^\lambda)-\frac{1}{|x|^{\gamma}}\overline{v}(x)\overline{u}^{p-1}(x),\\
(-\triangle)^{\beta/2}
V_{\lambda}(x)=\frac{1}{|x^\lambda|^{\gamma}}\overline{u}^{p}(x^\lambda)-\frac{1}{|x|^{\gamma}}\overline{u}^{p}(x).
\end{array}
 \right.
\end{eqnarray}

\subsection{Proof of Theorem 1.1}
Now,we start moving planes.
\subsubsection{Subcritical Case $\frac{n}{n-\alpha}\leq p<\frac{n+\beta}{n-\alpha}$.}
$\mathbf{Step.1}$: We show that when $\lambda$ sufficiently
negative,
\begin{eqnarray}\label{011805}
 U_\lambda(x), V_\lambda(x)\geq0, \,\forall x\in\Sigma_{\lambda}\setminus\{0^\lambda\}.
\end{eqnarray}
We claim that for $\lambda$ sufficiently negative, there exists a constant $C$ such that 
$$U_\lambda(x), V_\lambda(x)\geq C>0,\,\,x\in B_\varepsilon(0^\lambda)\backslash\{0^\lambda\},$$
we will prove it in Appendix.
Hence, there must be a point $\bar{x}$ such that
$$U_\lambda(\bar{x})=\min_{x\in\Sigma_\lambda }U_\lambda(x)<0.$$
Moreover,
\begin{eqnarray*}
&&(-\triangle)^{\alpha/2}U_\lambda(\bar{x})\\
&=&C_{n,\alpha} PV\int_{R^n}\frac{U_\lambda(\bar{x})-U_\lambda(y)}{|\bar{x}-y|^{n+\alpha}}dy\\
&=&C_{n,\alpha}PV\int_{\Sigma_\lambda}\frac{U_\lambda(\bar{x})-U_\lambda(y)}{|\bar{x}-y|^{n+\alpha}}dy+C_{n,\alpha}PV\int_{\tilde{\Sigma}_\lambda}\frac{U_\lambda(\bar{x})-U_\lambda(y)}{|\bar{x}-y|^{n+\alpha}}dy\\
&=&C_{n,\alpha}PV\int_{\Sigma_\lambda}\frac{U_\lambda(\bar{x})-U_\lambda(y)}{|\bar{x}-y|^{n+\alpha}}dy+C_{n,\alpha}PV\int_{\Sigma_\lambda}\frac{U_\lambda(\bar{x})-U_\lambda(y^\lambda)}{|\bar{x}-y^\lambda|^{n+\alpha}}dy\\
&=&C_{n,\alpha}PV\int_{\Sigma_\lambda}\frac{U_\lambda(\bar{x})-U_\lambda(y)}{|\bar{x}-y|^{n+\alpha}}dy+C_{n,\alpha}PV\int_{\Sigma_\lambda}\frac{U_\lambda(\bar{x})+U_\lambda(y)}{|\bar{x}-y|^{n+\alpha}}dy\\
&\leq&C_{n,\alpha}PV\int_{\Sigma_\lambda}\frac{U_\lambda(\bar{x})-U_\lambda(y)}{|\bar{x}-y^\lambda|^{n+\alpha}}dy+C_{n,\alpha}PV\int_{\Sigma_\lambda}\frac{U_\lambda(\bar{x})+U_\lambda(y)}{|\bar{x}-y|^{n+\alpha}}dy\\
&=&C_{n,\alpha}PV\int_{\Sigma_\lambda}\frac{2U_\lambda(\bar{x})}{|\bar{x}-y^\lambda|^{n+\alpha}}dy.
\end{eqnarray*}
From a similar argument in (\ref{1.8}), we have 
\begin{equation}\label{011902}
(-\triangle)^{\alpha/2}U_\lambda(\bar{x})\leq\frac{CU_\lambda(\bar{x})}{|\bar{x}|^\alpha}<0.
\end{equation}
We claim that
\begin{equation}\label{011901}
V_\lambda(\bar{x})<0.
\end{equation}
Indeed, if not, $V_\lambda(\bar{x})\geq 0$. From (\ref{011804}), we have
\begin{eqnarray*}
&&(-\triangle)^{\alpha/2}U_\lambda(\bar{x})\\
&=&\frac{1}{|\bar{x}^\lambda|^{\gamma}}\overline{v}(\bar{x}^\lambda)\overline{u}^{p-1}(\bar{x}^\lambda)-\frac{1}{|\bar{x}|^{\gamma}}\overline{v}(\bar{x})\overline{u}^{p-1}(\bar{x})\\
&\geq&\frac{1}{|\bar{x}^\lambda|^{\gamma}}\overline{v}(\bar{x}^\lambda)\overline{u}^{p-1}(\bar{x}^\lambda)-\frac{1}{|\bar{x}|^{\gamma}}\overline{v}(\bar{x})\overline{u}^{p-1}(\bar{x}^\lambda)\\
&\geq&\frac{1}{|\bar{x}^\lambda|^{\gamma}}\overline{v}(\bar{x}^\lambda)\overline{u}^{p-1}(\bar{x}^\lambda)-\frac{1}{|\bar{x}|^{\gamma}}\overline{v}(\bar{x})\overline{u}^{p-1}(\bar{x}^\lambda)\\
&\geq&\frac{1}{|\bar{x}|^{\gamma}}\overline{u}^{p-1}(\bar{x}^\lambda)V_\lambda(\bar{x})\\
&\geq&0.
\end{eqnarray*}
This is a contradiction with (\ref{011902}). Then (\ref{011901}) holds. And from (\ref{011901}), we know there exists $\tilde{x}$ such that 
$$V_\lambda({\tilde{x}})=\min_{\Sigma_\lambda}V_\lambda(x)<0.$$
Similar to (\ref{011901}), we can obtain
$$U_\lambda(\tilde{x})<0.$$
Therefore,
\begin{eqnarray}\nonumber
	&&(-\triangle)^{\alpha/2}U_\lambda(\bar{x})\\ \nonumber
	&=&\frac{1}{|\bar{x}^\lambda|^{\gamma}}\overline{v}(\bar{x}^\lambda)\overline{u}^{p-1}(\bar{x}^\lambda)-\frac{1}{|\bar{x}|^{\gamma}}\overline{v}(\bar{x})\overline{u}^{p-1}(\bar{x})\\ \nonumber
&=&\frac{1}{|\bar{x}^\lambda|^{\gamma}}\overline{v}(\bar{x}^\lambda)\overline{u}^{p-1}(\bar{x}^\lambda)-\frac{1}{|\bar{x}|^{\gamma}}\overline{v}(\bar{x})\overline{u}^{p-1}(\bar{x}^\lambda)+\frac{1}{|\bar{x}|^{\gamma}}\overline{v}(\bar{x})\overline{u}^{p-1}(\bar{x}^\lambda)-\frac{1}{|\bar{x}|^{\gamma}}\overline{v}(\bar{x})\overline{u}^{p-1}(\bar{x})\\ \nonumber
&\geq&\frac{1}{|\bar{x}|^{\gamma}}\big[\overline{v}(\bar{x}^\lambda)\overline{u}^{p-1}(\bar{x}^\lambda)-\overline{v}(\bar{x})\overline{u}^{p-1}(\bar{x}^\lambda)\big]+\frac{1}{|\bar{x}|^{\gamma}}\big[\overline{v}(\bar{x})\overline{u}^{p-1}(\bar{x}^\lambda)-\overline{v}(\bar{x})\overline{u}^{p-1}(\bar{x})\big]\\ \nonumber
&=&\frac{1}{|\bar{x}|^{\gamma}}\overline{u}^{p-1}(\bar{x}^\lambda)V_\lambda(\bar{x})+(p-1)\frac{1}{|\bar{x}|^{\gamma}}\overline{v}(\bar{x})\xi^{p-2}U_\lambda(\bar{x}),\,\,\,\,\xi\in[\overline{u}(\bar{x}^\lambda),\overline{u}(\bar{x})]\\ \label{012701}
&\geq&\frac{1}{|\bar{x}|^{\gamma}}\overline{u}^{p-1}(\bar{x})V_\lambda(\bar{x})+(p-1)\frac{1}{|\bar{x}|^{\gamma}}\overline{v}(\bar{x})\bar{u}(\bar{x})^{p-2}U_\lambda(\bar{x}),
\end{eqnarray}
and
\begin{eqnarray*}
&&(-\triangle)^{\beta/2}V_\lambda(\tilde{x})\\
	&=&\frac{1}{|\tilde{x}^\lambda|^{\gamma}}\overline{u}^{p}(\tilde{x}^\lambda)-\frac{1}{|\tilde{x}|^{\gamma}}\overline{u}^{p}(\tilde{x})\\
	&=&\frac{1}{|\tilde{x}^\lambda|^{\gamma}}\overline{u}^{p}(\tilde{x}^\lambda)-\frac{1}{|\tilde{x}|^{\gamma}}\overline{u}^{p}(\tilde{x}^\lambda)+\frac{1}{|\tilde{x}|^{\gamma}}\overline{u}^{p}(\tilde{x}^\lambda)-\frac{1}{|\tilde{x}|^{\gamma}}\overline{u}^{p}(\tilde{x})\\
	&=&(\frac{1}{|\tilde{x}^\lambda|^{\gamma}}-\frac{1}{|\tilde{x}|^{\gamma}})\overline{u}^{p}(\tilde{x}^\lambda)+p\frac{1}{|\tilde{x}|^{\gamma}}\eta^{p-1}U_\lambda(\tilde{x}),\,\,\,\eta\in[\overline{u}(\tilde{x}^\lambda),\overline{u}(\tilde{x})]\\
	&\geq&p\frac{1}{|\tilde{x}|^{\gamma}}\overline{u}^{p-1}(\tilde{x})U_\lambda(\tilde{x}).
\end{eqnarray*}
Let
\begin{eqnarray*}
C_1(\tilde{x})&=&p\frac{1}{|\tilde{x}|^{\gamma}}\overline{u}^{p-1}(\tilde{x})\\
&\sim&\frac{1}{|\tilde{x}|^{\gamma}}\frac{1}{|\tilde{x}|^{(n-\alpha)(p-1)}}\\
&\sim&\frac{1}{|\tilde{x}|^{\alpha+\beta}},\,\,|\tilde{x}| \,\,\mbox{large \,\,enough},
\end{eqnarray*}
\begin{eqnarray*}
C_2(\bar{x})&=&(p-1)\frac{1}{|\bar{x}|^{\gamma}}\overline{v}(\bar{x})\bar{u}(\bar{x})^{p-2}\\
&\sim&\frac{1}{|\bar{x}|^{\gamma}}\frac{1}{|\bar{x}|^{n-\beta}}\frac{1}{|\bar{x}|^{(n-\alpha)(p-2)}}\\
&\sim&\frac{1}{|\bar{x}|^{2\alpha}},\,\,|\bar{x}| \,\,\mbox{large \,\,enough},
\end{eqnarray*}
and 
\begin{eqnarray*}
C_3(\bar{x})&=&\frac{1}{|\bar{x}|^{\gamma}}\overline{u}^{p-1}(\bar{x})\\
&\sim&\frac{1}{|\bar{x}|^{\gamma}}\frac{1}{|\bar{x}|^{(n-\alpha)(p-1)}}\\
&\sim&\frac{1}{|\bar{x}|^{\alpha+\beta}},\,\,|\bar{x}| \,\,\mbox{large \,\,enough}.
\end{eqnarray*}
Then by Theorem \ref{t1}, if $\lambda$ sufficiently negative, there must be one of $U_\lambda(x)$ and $V_\lambda(x)$ positive in $\Sigma_\lambda\backslash\{0^\lambda\}.$
 Without loss of generality, we assume 
 $$U_\lambda(x)\geq0,\,\,x\in\Sigma_\lambda\backslash\{0^\lambda\}.$$
And we claim that
$$V_\lambda(x)\geq0,\,\,x\in\Sigma_\lambda\backslash\{0^\lambda\}.$$
If not, there exists $\tilde{x}$ such that 
$$V_\lambda(\tilde{x})=\min_{\Sigma_\lambda}V_\lambda(x)<0.$$
Then from a similar argument, we can show
$$0>\frac{CV_\lambda(\tilde{x})}{|\tilde{x}|^\beta}\geq(-\triangle)^{\beta/2}V_\lambda(\tilde{x})\geq p\frac{1}{|\tilde{x}|^\gamma}\bar{x}^{p-1}(\tilde{x})U_\lambda(\tilde{x})>0.$$
This is a contradiction. This completes Step 1.

$\mathbf{Step.2}$: Step 1 provides a starting point, from which we can now move the plane $T_\lambda$ to the right as long as $(\ref{011805})$ holds to its limiting position.

Let$$\lambda_0=\{\lambda\leq 0\mid U_\mu\geq0, V_\mu\geq 0, \forall x\in\Sigma_\mu\backslash\{0^\mu\},\mu\leq\lambda\}.$$
By definition,
$$U_{\lambda_0}(x), V_{\lambda_0}(x)\geq 0,\,\,\forall x\in \Sigma_{\lambda_0}\backslash\{0^{\lambda_0}\}.$$

(i): If $\lambda_0=0$, we can move $T_\lambda$ from $+\infty$ to the left and show that
$$U_{\lambda_0}(x), V_{\lambda _0}(x)\leq 0,\,\,x\in\Sigma_{\lambda_0}\backslash\{0^{\lambda_0}\},$$
with $\lambda_0=0$. We obtain
$$U_0(x)=V_0(x)\equiv 0,\,\,x\in\Sigma_0.$$

For more general Kelvin transform, through a similar argument we can show that
$$\lambda_0=x^0_1,$$
and
$$U_{\lambda_0}(x), V_{\lambda_0}(x)\equiv 0,\,\,x\in\Sigma_{\lambda_0}.$$
Since the $x_1$ direction and $x^0$ can be chosen arbitrarily, we have actually shown that $\bar{u}$ and $\bar{v}$ is radially symmetric about any point in $R^n$. For any $x^1, x^2\in R^n$, let the midcenter be the point of Kelvin transform
$$x^0=\frac{x^1+x^2}{2},$$
and
$$y^1=\frac{x^1-x^0}{|x^1-x^0|^2}+x^0,\,\,y^2=\frac{x^2-x^0}{|x^2-x^0|^2}+x^0.$$
Then,
$$\bar{u}(y^1)=\bar{u}(y^2),\,\,\bar{v}(y^1)=\bar{y^2}.$$
Thus,
$$u(x^1)=u(x^2),\,\,v(x^1)=v(x^2).$$
Since $x^1, x^2$ is chosen arbitrarily, $u$ and $v$ must be constant. From (\ref{1.1}), we know
$$(-\triangle)^{\alpha/2}u=0,$$
and 
$$(|x|^{\beta-n}\ast u^p)u^{p-1}>0,$$
a contradiction. Hence, $(u,v)=(0,0)$.

(ii): If $\lambda_0<0$, there must be two cases.

Case i: $$U_{\lambda_0}(x)=V_{\lambda_0}(x)\equiv 0,\,\,\forall x\in\Sigma_{\lambda_0}\backslash\{0^{\lambda_0}\}.$$
Indeed, we suppose there exists $\bar{x}$ such that 
$$U_{\lambda_0}(\bar{x})=\min_{\Sigma_{\lambda_0}}U_{\lambda_0}(x)=0.$$
Then it must be  true that 
\begin{equation}\label{012601}
U_{\lambda_0}(x)\equiv 0,\,\,\forall x\in\Sigma_{\lambda_0}.
\end{equation}
If not,
\begin{eqnarray*}
	(-\triangle)^{\alpha/2}U_{\lambda_0}(\bar{x})=C_{n,\alpha}PV\int_{R^n}\frac{-U_{\lambda_0}(\bar{x})}{|\bar{x}-y|^{n+\alpha}}dy<0.
\end{eqnarray*}
On the other hand,
\begin{eqnarray*}
	&&(-\triangle)^{\alpha/2}U_{\lambda_0}(\bar{x})\\
	&=&\frac{1}{|\bar{x}^{\lambda_0}|^{\gamma}}\overline{v}(\bar{x}^{\lambda_0})\overline{u}^{p-1}(\bar{x}^{\lambda_0})-\frac{1}{|\bar{x}|^{\gamma}}\overline{v}(\bar{x})\overline{u}^{p-1}(\bar{x})\\
	&\geq&\frac{1}{|\bar{x}^{\lambda_0}|^{\gamma}}\overline{v}(\bar{x}^{\lambda_0})\overline{u}^{p-1}(\bar{x}^{\lambda_0})-\frac{1}{|\bar{x}|^{\gamma}}\overline{v}(\bar{x})\overline{u}^{p-1}(\bar{x}^{\lambda_0})\\
	&\geq&\frac{1}{|\bar{x}^{\lambda_0}|^{\gamma}}\overline{v}(\bar{x}^{\lambda_0})\overline{u}^{p-1}(\bar{x}^{\lambda_0})-\frac{1}{|\bar{x}|^{\gamma}}\overline{v}(\bar{x})\overline{u}^{p-1}(\bar{x}^{\lambda_0})\\
	&\geq&\frac{1}{|\bar{x}|^{\gamma}}\overline{u}^{p-1}(\bar{x}^{\lambda_0})V_{\lambda_0}(\bar{x})\\
	&\geq&0,
\end{eqnarray*}
which is a contradiction. This proves (\ref{012601}).

Since
$$U_{\lambda_0}(x)=-U_{\lambda_0}(x^{\lambda_0}),$$
we have
$$U_{\lambda_0}(x)\equiv 0,\,\,x\in R^n.$$
Then,
\begin{eqnarray*}
	0&=&(-\triangle)^{\alpha/2}U_{\lambda_0}(x)\\
	&=&\frac{1}{|x^{\lambda_0}|^{\gamma}}\overline{v}(x^{\lambda_0})\overline{u}^{p-1}(x^{\lambda_0})-\frac{1}{|x|^{\gamma}}\overline{v}(x)\overline{u}^{p-1}(x)\\
	&=&\frac{1}{|x^{\lambda_0}|^{\gamma}}\overline{v}(x^{\lambda_0})\overline{u}^{p-1}(x)-\frac{1}{|x|^{\gamma}}\overline{v}(x)\overline{u}^{p-1}(x),	
\end{eqnarray*}
we can derive that
$$\overline{v}(x^{\lambda_0})\leq\overline{v}(x),\,\,x\in\Sigma_{\lambda_0}.$$
Combing this with the fact
$$\overline{v}(x^{\lambda_0})\geq\overline{v}(x),\,\,x\in\Sigma_{\lambda_0}.$$
We can deduce
$$V_{\lambda_0}(x)\equiv 0,\,\,x\in R^n.$$
From a similar argument, we can also have if $V_{\lambda_0}(x)=0$ somewhere, then
$$U_{\lambda_0}(x)=V_{\lambda_0}(x)\equiv 0,\,\,x\in R^n.$$
Therefore, for all $x\in R^n$,
\begin{eqnarray*}
	0&=&(-\triangle)^{\alpha/2}U_{\lambda_0}(x)\\
		&=&\frac{1}{|x^{\lambda_0}|^{\gamma}}\overline{v}(x^{\lambda_0})\overline{u}^{p-1}(x^{\lambda_0})-\frac{1}{|x|^{\gamma}}\overline{v}(x)\overline{u}^{p-1}(x),
\end{eqnarray*}
and 
\begin{eqnarray*}
	0&=&(-\triangle)^{\beta/2}V_{\lambda_0}(x)\\
&=&\frac{1}{|x^{\lambda_0}|^{\gamma}}\overline{u}^{p}(x^{\lambda_0})-\frac{1}{|x|^{\gamma}}\overline{u}^{p}(x).
\end{eqnarray*}
We can easily deduce that
$$\bar{u}(x)=\bar{v}(x)\equiv 0,\,\,x\in R^n.$$
Then 
$$u(x)=v(x)\equiv0,\,\,x\in R^n.$$

Case ii:  $$U_{\lambda_0}(x), V_{\lambda_0}(x)> 0,\,\,\forall x\in\Sigma_{\lambda_0}\backslash\{0^{\lambda_0}\}.$$
We show that the plane $T_\lambda$ can be moved further right. To be more rigorous, there exists some $\varepsilon>0$, such that for any $\lambda\in(\lambda_0,\lambda_0+\varepsilon)$, we have
$$U_\lambda(x), V_\lambda(x)\geq 0,\,\,x\in\Sigma_\lambda\backslash\{0^\lambda\},$$
This is a contradiction with the definition of $\lambda_0$, so this case will not happen. 

Indeed, first we claim that for $\lambda_0<0$ and $\varepsilon$ sufficiently small,
$$U_{\lambda_0}(x), V_{\lambda_0}(x)\geq C>0,\,\,x\in B_\varepsilon(0^{\lambda_0})\backslash\{0^{\lambda_0}\},$$
this will be proved in Appendix. Then there exist $\delta>0$ small and a constant $C>0$ such that 
$$U_{\lambda_0}(x), V_{\lambda_0}(x)\geq C>0,\,\,x\in (\Sigma_{\lambda_0-\delta}\backslash\{0^{\lambda_0}\})\cap B_R(0).$$
Since $U_\lambda(x), V_\lambda(x)$ are continuous about $\lambda$, then
$$U_{\lambda}(x), V_{\lambda}(x)\geq 0,\,\,x\in (\Sigma_{\lambda_0-\delta}\backslash\{0^{\lambda_0}\})\cap B_R(0).$$
Suppose $U_{\lambda}(x)<0,\,\,x\in \Sigma_{\lambda}\backslash\{0^{\lambda}\},$
then there must exist $\bar{x}$ such that
$$U_{\lambda}(\bar{x})=\min_{\Sigma_\lambda}U_\lambda(x)<0.$$
From a similar argument in the proof of \emph{Decay at Infinity}, we have
$$V_{\lambda}(\bar{x})<0,\,\,x\in\Sigma_{\lambda}\backslash\{0^{\lambda}\}.$$
Then there exists $\tilde{x}$ such that 
$$V_\lambda(\tilde{x})=\min_{\Sigma_\lambda}V_\lambda(x)<0.$$
By Theorem \ref{t1} (\emph{Decay at Infinity}), one of $\bar{x}$ and $\tilde{x}$ must
be in $B_R(0)$. Without loss of generality, we assume
$$|\bar{x}|<R.$$
Hence,
$$\bar{x}\in(\Sigma_\lambda\backslash\Sigma_{\lambda_0-\delta})\cap B_R(0).$$

If $\tilde{x}\in(\Sigma_\lambda\backslash\Sigma_{\lambda_0-\delta})\cap B_R(0)$, then by (\ref{011902}), we have,
$$(-\triangle)^{\alpha/2}U_\lambda(\bar{x})\leq\frac{CU_\lambda(\bar{x})}{(\delta+\varepsilon)^\alpha}<0.$$
Similarly, $$(-\triangle)^{\beta/2}V_\lambda(\tilde{x})\leq\frac{CV_\lambda(\tilde{x})}{(\delta+\varepsilon)^\beta}<0.$$
Then,
\begin{eqnarray*}
	0&\leq& (-\triangle)^{\alpha/2}V_\lambda(\bar{x})+C_2(\bar{x})U_\lambda(\bar{x})+C_3(\bar{x})V_\lambda(\bar{x})\\
	&\leq&\frac{CU_\lambda(\bar{x})}{(\delta+\varepsilon)^\alpha}+C_2(\bar{x})U_\lambda(\bar{x})+C_3(\bar{x})V_\lambda(\tilde{x})\\
	&\leq&\frac{CU_\lambda(\bar{x})}{(\delta+\varepsilon)^\alpha}+C_2(\bar{x})U_\lambda(\bar{x})+C_3(\bar{x})V_\lambda(\tilde{x})\\
	&\leq&\frac{CU_\lambda(\bar{x})}{(\delta+\varepsilon)^\alpha}+C_2(\bar{x})U_\lambda(\bar{x})-C_3(\bar{x})C_1(\tilde{x})(\delta+\varepsilon)^\beta U_\lambda(\tilde{x})\\
		&\leq&\frac{CU_\lambda(\bar{x})}{(\delta+\varepsilon)^\alpha}+C_2(\bar{x})U_\lambda(\bar{x})-C_3(\bar{x})C_1(\tilde{x})(\delta+\varepsilon)^\beta U_\lambda(\bar{x})\\
		&=&\frac{CU_\lambda(\bar{x})}{(\delta+\varepsilon)^\alpha}\{1-C_3(\bar{x})C_1(\tilde{x})(\delta+\varepsilon)^{\alpha+\beta}\}+C_2(\bar{x})U_\lambda(\bar{x})
\end{eqnarray*}
Through an identical argument in Theorem \ref{t2} (\emph{Narrow Region Principle}), we have
$$U_\lambda(x), V_\lambda(x)\geq 0,\,\,x\in (\Sigma_\lambda\backslash\Sigma_{\lambda_0-\delta})\cap B_R(0).$$
It implies that $\tilde{x}\in(\Sigma_\lambda\backslash\Sigma_{\lambda_0-\delta})\cap B_R(0)$ will not be happen.

If $\tilde{x}\in B_R^c\cap\Sigma_\lambda$, then
\begin{eqnarray*}
0&>&\frac{CU_\lambda(\bar{x})}{(\delta+\varepsilon)^\alpha}\\
&\geq&(-\triangle)^{\alpha/2}U_\lambda(\bar{x})\\
&\geq&\frac{1}{|\bar{x}|^{\gamma}}\overline{u}^{p-1}(\bar{x})V_\lambda(\bar{x})+(p-1)\frac{1}{|\bar{x}|^{\gamma}}\overline{v}(\bar{x})\bar{u}^{p-2}(\bar{x})U_\lambda(\bar{x})\\
&\geq&\frac{1}{|\bar{x}|^{\gamma}}\overline{u}^{p-1}(\bar{x})V_\lambda(\tilde{x})+(p-1)\frac{1}{|\bar{x}|^{\gamma}}\overline{v}(\bar{x})\bar{u}^{p-2}(\bar{x})U_\lambda(\bar{x}),
\end{eqnarray*}
and,
\begin{eqnarray*}
	0&>&\frac{CV_\lambda(\tilde{x})}{|\tilde{x}|^\beta}\\
	&\geq&(-\triangle)^{\beta/2}U_\lambda(\tilde{x})\\
	&\geq&p\frac{1}{|\tilde{x}|^{\gamma}}\overline{u}^{p-1}(\tilde{x})U_\lambda(\tilde{x})\\
	&\geq&p\frac{1}{|\tilde{x}|^{\gamma}}\overline{u}^{p-1}(\tilde{x})U_\lambda(\bar{x}).
\end{eqnarray*}
Through a simple calculation, we have
$$V_\lambda(\tilde{x})\geq|\tilde{x}|^{\beta-\gamma}\bar{u}^{p-1}(\tilde{x})U_\lambda(\bar{x}),$$
and,
\begin{eqnarray*}
U_\lambda(\bar{x})&\geq&(\delta+\varepsilon)^\alpha\big\{\frac{1}{|\bar{x}|^{\gamma}}\overline{u}^{p-1}(\bar{x})V_\lambda(\tilde{x})+(p-1)\frac{1}{|\bar{x}|^{\gamma}}\overline{v}(\bar{x})\bar{u}^{p-2}(\bar{x})U_\lambda(\bar{x})\big\}\\
&\geq&(\delta+\varepsilon)^\alpha\big\{\frac{1}{|\bar{x}|^{\gamma}}\overline{u}^{p-1}(\bar{x})|\tilde{x}|^{\beta-\gamma}\bar{u}^{p-1}(\tilde{x})U_\lambda(\bar{x})+(p-1)\frac{1}{|\bar{x}|^{\gamma}}\overline{v}(\bar{x})\bar{u}^{p-2}(\bar{x})U_\lambda(\bar{x})\big\}.
\end{eqnarray*}
Then,
\begin{eqnarray}\label{012705}\nonumber
1&\leq&(\delta+\varepsilon)^\alpha\big\{\frac{1}{|\bar{x}|^{\gamma}}\overline{u}^{p-1}(\bar{x})|\tilde{x}|^{\beta-\gamma}\bar{u}^{p-1}(\tilde{x})+(p-1)\frac{1}{|\bar{x}|^{\gamma}}\overline{v}(\bar{x})\bar{u}^{p-2}(\bar{x})\big\}\\
&=&(\delta+\varepsilon)^\alpha\big\{\frac{1}{|\bar{x}|^{\alpha+\beta}}u^{p-1}(\frac{\bar{x}}{|\bar{x}|^2})\frac{1}{|\tilde{x}|^\alpha}u^{p-1}(\frac{\tilde{x}}{|\tilde{x}|^2})+(p-1)\frac{1}{|\bar{x}|^{2\alpha}}v(\frac{\bar{x}}{|\bar{x}|^2})u^{p-2}(\frac{\bar{x}}{|\bar{x}|^2})\big\}.
\end{eqnarray}
For a fixed $\lambda_0<0$, when $\varepsilon$ is sufficiently small, we have $\lambda<\lambda+\varepsilon<\frac{\lambda_0}{2}$. Since $\lambda_0\in\Sigma_\lambda$, it deduces that $|\bar{x}|>-\frac{\lambda_0}{2}$. Notice that $|\tilde{x}|>R$, then $\frac{1}{|\bar{x}|^{\alpha+\beta}}u^{p-1}(\frac{\bar{x}}{|\bar{x}|^2})\frac{1}{|\tilde{x}|^\alpha}u^{p-1}(\frac{\tilde{x}}{|\tilde{x}|^2})+(p-1)\frac{1}{|\bar{x}|^{2\alpha}}v(\frac{\bar{x}}{|\bar{x}|^2})u^{p-2}(\frac{\bar{x}}{|\bar{x}|^2})$ is bounded. This shows that (\ref{012705}) must not be true for $\delta$ sufficiently small. This implies this case also will not happen.

\subsubsection{Critical Case $p=\frac{n+\beta}{n-\alpha}$.}
$\mathbf{Step.1}$: We show that when $\lambda$ sufficiently
negative,
\begin{eqnarray}\label{012801}
U_\lambda(x), V_\lambda(x)\geq0, \,\forall x\in\Sigma_{\lambda}\setminus\{0^\lambda\}.
\end{eqnarray}
We claim that for $\lambda$ sufficiently negative, there exists a constant $C$ such that 
$$U_\lambda(x), V_\lambda(x)\geq C>0,\,\,x\in B_\varepsilon(0^\lambda)\backslash\{0^\lambda\},$$
we will prove it in Appendix.
Hence, there must be a point $\bar{x}$ such that
$$U_\lambda(\bar{x})=\min_{x\in\Sigma_\lambda }U_\lambda(x)<0.$$
Moreover,
\begin{eqnarray*}
	&&(-\triangle)^{\alpha/2}U_\lambda(\bar{x})\\
	&=&C_{n,\alpha} PV\int_{R^n}\frac{U_\lambda(\bar{x})-U_\lambda(y)}{|\bar{x}-y|^{n+\alpha}}dy\\
	&=&C_{n,\alpha}PV\int_{\Sigma_\lambda}\frac{U_\lambda(\bar{x})-U_\lambda(y)}{|\bar{x}-y|^{n+\alpha}}dy+C_{n,\alpha}PV\int_{\tilde{\Sigma}_\lambda}\frac{U_\lambda(\bar{x})-U_\lambda(y)}{|\bar{x}-y|^{n+\alpha}}dy\\
	&=&C_{n,\alpha}PV\int_{\Sigma_\lambda}\frac{U_\lambda(\bar{x})-U_\lambda(y)}{|\bar{x}-y|^{n+\alpha}}dy+C_{n,\alpha}PV\int_{\Sigma_\lambda}\frac{U_\lambda(\bar{x})-U_\lambda(y^\lambda)}{|\bar{x}-y^\lambda|^{n+\alpha}}dy\\
	&=&C_{n,\alpha}PV\int_{\Sigma_\lambda}\frac{U_\lambda(\bar{x})-U_\lambda(y)}{|\bar{x}-y|^{n+\alpha}}dy+C_{n,\alpha}PV\int_{\Sigma_\lambda}\frac{U_\lambda(\bar{x})+U_\lambda(y)}{|\bar{x}-y|^{n+\alpha}}dy\\
	&\leq&C_{n,\alpha}PV\int_{\Sigma_\lambda}\frac{U_\lambda(\bar{x})-U_\lambda(y)}{|\bar{x}-y^\lambda|^{n+\alpha}}dy+C_{n,\alpha}PV\int_{\Sigma_\lambda}\frac{U_\lambda(\bar{x})+U_\lambda(y)}{|\bar{x}-y|^{n+\alpha}}dy\\
	&=&C_{n,\alpha}PV\int_{\Sigma_\lambda}\frac{2U_\lambda(\bar{x})}{|\bar{x}-y^\lambda|^{n+\alpha}}dy.
\end{eqnarray*}
From a similar argument in (\ref{1.8}), we have 
\begin{equation}\label{012802}
(-\triangle)^{\alpha/2}U_\lambda(\bar{x})\leq\frac{CU_\lambda(\bar{x})}{|\bar{x}|^\alpha}<0.
\end{equation}
We claim that
\begin{equation}\label{012803}
V_\lambda(\bar{x})<0.
\end{equation}
Indeed, if not, $V_\lambda(\bar{x})\geq 0$. From (\ref{011804}), we have
\begin{eqnarray*}
	&&(-\triangle)^{\alpha/2}U_\lambda(\bar{x})\\
	&=&\overline{v}(\bar{x}^\lambda)\overline{u}^{p-1}(\bar{x}^\lambda)-\overline{v}(\bar{x})\overline{u}^{p-1}(\bar{x})\\
	&\geq&\overline{v}(\bar{x}^\lambda)\overline{u}^{p-1}(\bar{x}^\lambda)-\overline{v}(\bar{x})\overline{u}^{p-1}(\bar{x}^\lambda)\\
	&\geq&\overline{u}^{p-1}(\bar{x}^\lambda)V_\lambda(\bar{x})\\
	&\geq&0.
\end{eqnarray*}
This is a contradiction with (\ref{012802}). Then (\ref{012803}) holds. And from (\ref{012803}), we know there exists $\tilde{x}$ such that 
$$V_\lambda({\tilde{x}})=\min_{\Sigma_\lambda}V_\lambda(x)<0.$$
Similar to (\ref{012803}), we can obtain
$$U_\lambda(\tilde{x})<0.$$
Therefore,
\begin{eqnarray}\nonumber
&&(-\triangle)^{\alpha/2}U_\lambda(\bar{x})\\ \nonumber
&=&\overline{v}(\bar{x}^\lambda)\overline{u}^{p-1}(\bar{x}^\lambda)-\overline{v}(\bar{x})\overline{u}^{p-1}(\bar{x})\\ \nonumber
&=&\overline{v}(\bar{x}^\lambda)\overline{u}^{p-1}(\bar{x}^\lambda)-\overline{v}(\bar{x})\overline{u}^{p-1}(\bar{x}^\lambda)+\overline{v}(\bar{x})\overline{u}^{p-1}(\bar{x}^\lambda)-\overline{v}(\bar{x})\overline{u}^{p-1}(\bar{x})\\ \nonumber
&=&\overline{u}^{p-1}(\bar{x}^\lambda)V_\lambda(\bar{x})+(p-1)\overline{v}(\bar{x})\xi^{p-2}U_\lambda(\bar{x}),\,\,\,\,\xi\in[\overline{u}(\bar{x}^\lambda),\overline{u}(\bar{x})]\\ \label{012805}
&\geq&\overline{u}^{p-1}(\bar{x})V_\lambda(\bar{x})+(p-1)\overline{v}(\bar{x})\bar{u}(\bar{x})^{p-2}U_\lambda(\bar{x}),
\end{eqnarray}
and
\begin{eqnarray*}
	&&(-\triangle)^{\beta/2}V_\lambda(\tilde{x})\\
	&=&\overline{u}^{p}(\tilde{x}^\lambda)-\overline{u}^{p}(\tilde{x})\\
	&=&p\frac{1}{|\tilde{x}|^{\gamma}}\eta^{p-1}U_\lambda(\tilde{x}),\,\,\,\eta\in[\overline{u}(\tilde{x}^\lambda),\overline{u}(\tilde{x})]\\
	&\geq&p\overline{u}^{p-1}(\tilde{x})U_\lambda(\tilde{x}).
\end{eqnarray*}
Let
\begin{eqnarray*}
	C_1(\tilde{x})&=&p\overline{u}^{p-1}(\tilde{x})\\
	&\sim&\frac{1}{|\tilde{x}|^{(n-\alpha)(p-1)}}\\
	&\sim&\frac{1}{|\tilde{x}|^{\alpha+\beta}},\,\,|\tilde{x}| \,\,\mbox{large \,\,enough},
\end{eqnarray*}
\begin{eqnarray*}
	C_2(\bar{x})&=&(p-1)\overline{v}(\bar{x})\bar{u}(\bar{x})^{p-2}\\
	&\sim&\frac{1}{|\bar{x}|^{n-\beta}}\frac{1}{|\bar{x}|^{(n-\alpha)(p-2)}}\\
	&\sim&\frac{1}{|\bar{x}|^{2\alpha}},\,\,|\bar{x}| \,\,\mbox{large \,\,enough},
\end{eqnarray*}
and 
\begin{eqnarray*}
	C_3(\bar{x})&=&\overline{u}^{p-1}(\bar{x})\\
	&\sim&\frac{1}{|\bar{x}|^{(n-\alpha)(p-1)}}\\
	&\sim&\frac{1}{|\bar{x}|^{\alpha+\beta}},\,\,|\bar{x}| \,\,\mbox{large \,\,enough}.
\end{eqnarray*}
Then by Theorem \ref{t1}, if $\lambda$ sufficiently negative, there must be one of $U_\lambda(x)$ and $V_\lambda(x)$ positive in $\Sigma_\lambda\backslash\{0^\lambda\}.$
Without loss of generality, we assume 
$$U_\lambda(x)\geq0,\,\,x\in\Sigma_\lambda\backslash\{0^\lambda\}.$$
And we claim that
$$V_\lambda(x)\geq0,\,\,x\in\Sigma_\lambda\backslash\{0^\lambda\}.$$
If not, there exists $\tilde{x}$ such that 
$$V_\lambda(\tilde{x})=\min_{\Sigma_\lambda}V_\lambda(x)<0.$$
Then from a similar argument, we can show
$$0>\frac{CV_\lambda(\tilde{x})}{|\tilde{x}|^\beta}\geq(-\triangle)^{\beta/2}V_\lambda(\tilde{x})\geq p\bar{x}^{p-1}(\tilde{x})U_\lambda(\tilde{x})>0.$$
This is a contradiction. This completes Step 1.

$\mathbf{Step.2}$: Step 1 provides a starting point, from which we can now move the plane $T_\lambda$ to the right as long as $(\ref{012801})$ holds to its limiting position.

Let$$\lambda_0=\{\lambda\leq0\mid U_\mu\geq0, V_\mu\geq 0, \forall x\in\Sigma_\mu\backslash\{0^\mu\},\mu\leq\lambda\}.$$
By definition,
$$U_{\lambda_0}(x), V_{\lambda_0}(x)\geq 0,\,\,\forall x\in \Sigma_{\lambda_0}\backslash\{0^{\lambda_0}\}.$$

$\mathbf{Case. i}$: $\lambda_0<0$. Similar to the subcritical case, one can show that 
$$U_{\lambda_0}(x), V_{\lambda_0}(x)\geq 0, \,x\in \Sigma_{\lambda_0}\backslash\{0^{\lambda_0}\}.$$
It follows that $x^0$ is not a singular point of $\bar{u}$ and $\bar{v}$ and hence 
$$u(x)=O(\frac{1}{|x|^{n-\alpha}}), v(x)=O(\frac{1}{|x|^{n-\beta}}),\,\,|x|\rightarrow\infty$$
This enables us to apply the method of moving plane to $u$ and $v$ directly and show that $u$ and $v$ are symmetric about some point in $R^n$.

$\mathbf{Case. ii}$: $\lambda_0=0$. Then by moving the planes from $+\infty$, we
derive that $\bar{u}$ and $\bar{v}$ are symmetric about the origin, and so do $u$ and $v$. In any case, $u$ and $v$ are symmetric about some point in $R^n$. 

This completes the proof.

\section{Appendices}
\begin{lemma}\label{lem5.1}
For $\lambda$ negative large, there exists a constant $C>0$, such that
\begin{equation}\label{05.1}
U_\lambda(x), V_\lambda(x)\geq C>0,\, x\in B_\varepsilon(0^\lambda)\backslash\{0^\lambda\}.
\end{equation}
\end{lemma}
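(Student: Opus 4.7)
The plan is to exploit the singularity of the Kelvin transform at the origin. Whenever $x\in B_\varepsilon(0^\lambda)\setminus\{0^\lambda\}$, the reflected point satisfies $x^\lambda\in B_\varepsilon(0)\setminus\{0\}$, so $\overline{u}(x^\lambda)$ and $\overline{v}(x^\lambda)$ inherit the factors $|x^\lambda|^{-(n-\alpha)}$ and $|x^\lambda|^{-(n-\beta)}$ respectively; these should yield positive lower bounds provided $u$ and $v$ themselves satisfy matching decay lower bounds at infinity. At the same time, for $\lambda\to -\infty$ the point $x$ satisfies $|x|\geq 2|\lambda|-\varepsilon\to\infty$, so $\overline{u}(x)$ and $\overline{v}(x)$ tend to $0$, and the difference $U_\lambda(x)=\overline{u}(x^\lambda)-\overline{u}(x)$ inherits the positive lower bound.

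First I would establish the pointwise lower bounds $u(y)\geq c_2/(1+|y|)^{n-\alpha}$ and $v(y)\geq c_1/(1+|y|)^{n-\beta}$ for $|y|$ large. For $v$ this is immediate from (\ref{011801}): since $u$ is continuous and strictly positive, there is a ball $B_r(y_0)$ on which $u\geq c_0>0$, hence
$$v(y)=\int_{R^n}\frac{u^p(z)}{|y-z|^{n-\beta}}\,dz\geq c_0^p\int_{B_r(y_0)}\frac{dz}{|y-z|^{n-\beta}}\geq \frac{c_1}{(1+|y|)^{n-\beta}}.$$
For $u$, I would exploit the Riesz representation $u=c_{n,\alpha}\,|x|^{\alpha-n}\ast(vu^{p-1})$ associated with $(-\triangle)^{\alpha/2}u=vu^{p-1}$; since $vu^{p-1}$ is bounded below by a positive constant on $B_r(y_0)$, the same convolution estimate produces the desired decay bound on $u$. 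Translating through the Kelvin transform: for $0<|z|<\varepsilon$ the inversion $z/|z|^2$ lies in $\{|y|>1/\varepsilon\}$, so
$$\overline{u}(z)=|z|^{-(n-\alpha)}u(z/|z|^2)\geq c_2',\qquad \overline{v}(z)=|z|^{-(n-\beta)}v(z/|z|^2)\geq c_1'.$$

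Second, I would show $\overline{u}(x),\overline{v}(x)\to 0$ uniformly in $x\in B_\varepsilon(0^\lambda)$ as $\lambda\to -\infty$. Indeed, $|x|\to\infty$ while $x/|x|^2\to 0$, so by local boundedness of $u,v$ near the origin,
$$\overline{u}(x)=O(|\lambda|^{-(n-\alpha)}),\qquad \overline{v}(x)=O(|\lambda|^{-(n-\beta)}).$$
Combining the two steps, $U_\lambda(x)\geq c_2'-o_\lambda(1)\geq c_2'/2$ and $V_\lambda(x)\geq c_1'/2$ for $\lambda$ sufficiently negative, which gives (\ref{05.1}) with $C=\tfrac12\min\{c_1',c_2'\}$.

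The main obstacle is the Riesz representation step used to derive the lower bound on $u$: one must justify that $u$ equals the convolution $c_{n,\alpha}\,|x|^{\alpha-n}\ast(vu^{p-1})$ from the hypothesis $u\in L_\alpha$ and the PDE. A clean substitute, should the direct representation fail, is to integrate the equation against the Green's function of $(-\triangle)^{\alpha/2}$ on a ball $B_R(0)$ and let $R\to\infty$, exploiting positivity of the source together with the already-obtained lower bound on $v$ to propagate a positive lower bound on $u$ from a compact set all the way to infinity.
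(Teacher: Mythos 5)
Your overall two-step architecture is the same as the paper's: show $\overline{u},\overline{v}\to 0$ in $B_\varepsilon(0^\lambda)$ as $\lambda\to -\infty$, and separately establish $\overline{u},\overline{v}\geq C>0$ on $B_\varepsilon(0)\setminus\{0\}$ by proving the decay lower bounds $u(x)\gtrsim |x|^{-(n-\alpha)}$, $v(x)\gtrsim |x|^{-(n-\beta)}$ for $|x|$ large, then combine. For $v$ your route is actually slicker than anything the paper writes out, since $v=|x|^{\beta-n}\ast u^p$ is a defining identity and the lower bound drops out immediately. The genuine divergence is in how you handle $u$: you reach for the full Riesz representation $u=c_{n,\alpha}|x|^{\alpha-n}\ast(vu^{p-1})$, which is precisely the content of the paper's Lemma 5.3 (one must rule out a nonzero harmonic additive constant $c_1$, using $p\geq n/(n-\alpha)$), and you would have to either invoke it or prove it, which is extra machinery for this purpose. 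The paper avoids this by a lighter device: it introduces a cutoff $\eta$ supported in $B_2$, defines an auxiliary $\phi$ solving $(-\triangle)^{\alpha/2}\phi=\eta v u^{p-1}$, and compares $u\geq \phi$ via the maximum principle on $B_R$ (with $\phi$ interpreted as the Dirichlet Green potential on $B_R$, which vanishes outside $B_R$, and then $R\to\infty$); since the localized source is compactly supported and bounded below, $\phi\sim |x|^{-(n-\alpha)}$ follows by inspection. Your proposed fallback, integrating against the Green's function on $B_R$ with the untruncated source and letting $R\to\infty$, is essentially the same mechanism as the paper's with the cutoff stripped away; either version works, but the paper's truncation makes the integrability and limiting steps trivial, whereas without truncation you are back to needing Lemma 5.3 to justify that the limit equals $u$ with no residual constant. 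So the proposal is sound, you correctly flag the one nontrivial step, and your fallback is the right fix — it just reproduces (a less economical version of) what the paper actually does.
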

\begin{proof}
For $x\in\Sigma_\lambda$, as $|x|\rightarrow -\infty$, it is easy to see that
\begin{equation}\label{05.2}
\overline{u}(x)\rightarrow 0.
\end{equation}
To prove (\ref{05.1}), it is sufficient to show
$$\overline{u}_\lambda(x)\geq C>0, \,x\in B_\varepsilon(0^\lambda)\backslash\{0^\lambda\}.$$
Or equivalently,
$$\overline{u}(x)\geq C>0, \,x\in B_\varepsilon(0)\backslash\{0\}.$$
Let $\eta$ be a smooth cut-off function such that $\eta\in[0,1]$ in $R^n$, $\mbox{supp}~ \eta\subset B_2$ and $\eta\equiv 1$ in $B_1$. Let
$$(-\triangle)^{\alpha/2}\phi(x)=\eta(x)v(x)u^{p-1}(x).$$
Then,
$$\phi(x)=C_{n,-\alpha}\int_{R^n}\frac{\eta(y)v(y)u^{p-1}(y)}{|x-y|^{n-\alpha}}dy
=C_{n,-\alpha}\int_{B_2(0)}\frac{\eta(y)v(y)u^{p-1}(y)}{|x-y|^{n-\alpha}}dy.$$
It is trivial for $|x|$ sufficiently large,
\begin{equation}\label{05.3}
  \phi(x)\sim\frac{1}{|x|^{n-\alpha}}.
\end{equation}
Since
\begin{eqnarray}\label{05.4}\left\{
\begin{array}{lll}
 (-\triangle)^{\alpha/2}(u-\phi)\geq 0,~~~&x\in B_R,\\
 (u-\phi)(x)\geq 0,~~~ &x\in B_R^c,
 \end{array}
 \right.
\end{eqnarray}
by the maximum principle, we have
$$(u-\phi)(x)\geq 0,\,x\in B_R,$$
thus
$$(u-\phi)(x)\geq0,\,x\in R^n.$$
For $|x|$ sufficiently large, from (\ref{05.3}), one can see that for some constant $C>0$,
\begin{equation}\label{05.5}
 u(x)\geq\frac{C}{|x|^{n-\alpha}}.
\end{equation}
Hence for $|x|$ small
$$u(\frac{x}{|x|^2})\geq C|x|^{n-\alpha},$$
and
$$\overline{u}(x)=\frac{1}{|x|^{n-\alpha}}u(\frac{x}{|x|^2})\geq C.$$
Together with (\ref{05.2}), it yields that
\begin{equation}\label{05.6}
  U_\lambda(x)\geq\frac{C}{2}>0, \,x\in B_\varepsilon(0^\lambda)\setminus\{0^\lambda\}.
\end{equation}
Through an identical argument, one can show that (\ref{05.6}) holds for $V_\lambda(x)$ as well.
\end{proof}

\begin{lemma}\label{5.3}
	Let $(u, v)$ be a pair of nonnegative solutions of (\ref{011802}), and $\bar{u}$, $\bar{v}$ be 
the Kelvin transform of $u$ and $v$, then $\bar{u}$ and $\bar{v}$ also satisfy 
	\begin{eqnarray*}\left\{
		\begin{array}{lll}
		\bar{u}(x)=\int_{R^n}\frac{|y|^{-\gamma}\bar{v}(y)\bar{u}^{p-1}(y)}{|x-y|^{n-\alpha}}dy\\
	 \bar{v}(x)=\int_{R^n}\frac{|y|^{-\gamma}\bar{u}^p(y)}{|x-y|^{n-\beta}}dy
		\end{array}
		\right.
	\end{eqnarray*} 
and vice versa.
\end{lemma}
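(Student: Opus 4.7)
The plan is to establish the equivalence in two directions, by inverting the fractional Laplacian via the Riesz potential and then applying the Kelvin change of variables. Starting from a nonnegative solution $(u,v)$ of (\ref{011802}), I would first establish the Riesz representation (with the normalization used in (\ref{011801}))
$$u(x)=\int_{R^n}\frac{v(y)u^{p-1}(y)}{|x-y|^{n-\alpha}}\,dy,\qquad v(x)=\int_{R^n}\frac{u^p(y)}{|x-y|^{n-\beta}}\,dy.$$
Since $u,v$ need not decay, the standard justification is a truncation-plus-maximum-principle argument: fix $R>0$, let $w_R(x)=\int_{B_R}G_R^\alpha(x,y)\,v(y)u^{p-1}(y)\,dy$ where $G_R^\alpha$ is the Green's function of $(-\triangle)^{\alpha/2}$ on $B_R$, observe that $(-\triangle)^{\alpha/2}(u-w_R)\ge 0$ in $B_R$ and $u-w_R\ge 0$ on $B_R^c$, and apply the maximum principle from Section~2 to obtain $u\ge w_R$. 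Letting $R\to\infty$ and invoking monotone convergence produces the full Riesz identity for $u$; the same reasoning with $\alpha$ replaced by $\beta$ gives the one for $v$.

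Next, I would transport these identities to $\bar u,\bar v$ via Kelvin. Writing $x=w/|w|^2$ and $y=z/|z|^2$, one has the key identities $|x-y|=|w-z|/(|w||z|)$, $dy=|z|^{-2n}\,dz$, $u(z/|z|^2)=|z|^{n-\alpha}\bar u(z)$, and $v(z/|z|^2)=|z|^{n-\beta}\bar v(z)$. Substituting into the Riesz formula for $u(w/|w|^2)=|w|^{n-\alpha}\bar u(w)$, the factors $|w|^{n-\alpha}$ on each side cancel, and the total exponent of $|z|$ collapses to
$$(n-\beta)+(n-\alpha)(p-1)-2n+(n-\alpha)=-[\alpha+\beta-(p-1)(n-\alpha)]=-\gamma,$$
producing exactly the claimed integral representation for $\bar u$. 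The argument for $\bar v$ is identical, with $\alpha$ replaced by $\beta$ and $vu^{p-1}$ replaced by $u^p$.

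For the converse, assume that $\bar u,\bar v$ satisfy the integral system. Applying $(-\triangle)^{\alpha/2}$ to the expression for $\bar u$ and using that $|x-y|^{-(n-\alpha)}$ is (up to the normalization in (\ref{011801})) the fundamental solution of $(-\triangle)^{\alpha/2}$ recovers the first equation of (\ref{011803}); the analogous calculation for $\bar v$ yields the second. Undoing the Kelvin transform then restores the original system (\ref{011802}) for $(u,v)$.

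The main obstacle is the first step: obtaining the Riesz-potential representation of $u,v$ without any a priori decay. Nonnegativity of $u$ and $v$ is indispensable, since both the comparison $u\ge w_R$ and the monotone passage $R\to\infty$ rely on it, and finiteness of the limiting integral is in turn guaranteed by the PDE itself. Once this representation is in hand, the rest is a direct change-of-variables computation, and the converse direction is a routine verification via the fundamental solution.
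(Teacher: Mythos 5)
Your overall strategy is closely parallel to the paper's, but there is one genuine gap and one structural difference worth noting.

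The gap is in the passage from the truncated Green's-function comparison to the asserted Riesz identity. Your argument correctly shows that $u-w_R\ge 0$ on $B_R^c$, that $(-\triangle)^{\alpha/2}(u-w_R)\ge 0$ in $B_R$, and hence, by the maximum principle, that $u\ge w_R$ in $\mathbb{R}^n$. Letting $R\to\infty$ with monotone convergence then yields only the one-sided inequality
$$u(x)\ge \tilde u(x):=\int_{\mathbb{R}^n}\frac{v(y)u^{p-1}(y)}{|x-y|^{n-\alpha}}\,dy,$$
not the claimed equality. To close the argument one must observe that $\Phi:=u-\tilde u$ is nonnegative and $\alpha$-harmonic on all of $\mathbb{R}^n$, invoke a Liouville theorem for $(-\triangle)^{\alpha/2}$ (the paper cites Proposition~2 of \cite{ZCCY}) to conclude $\Phi\equiv c_1\ge 0$, and then rule out $c_1>0$: if $u\ge c_1>0$ everywhere, then $v(x)=\int_{\mathbb{R}^n}\frac{u^p(y)}{|x-y|^{n-\beta}}\,dy\ge c_1^p\int_{\mathbb{R}^n}\frac{dy}{|x-y|^{n-\beta}}=\infty$, contradicting finiteness of $v$. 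Your proposal never mentions the Liouville step, so as written the ``Riesz identity'' is not established.

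Aside from that, your route genuinely differs from the paper's: you derive the representation for $u$ directly from (\ref{011802}) and then transport it to $\bar u$ by the Kelvin change of variables, whereas the paper truncates with the Green's function for (\ref{011803}) and works with $\bar u$ from the start. Both are viable; your version has the modest advantage of avoiding the singular weight $|x|^{-\gamma}$ inside the Green's-function integral (that weight only enters afterward, through the Jacobian of the Kelvin map), while the paper's version stays internal to the transformed system and so lines up more directly with the rest of Section~3. In either formulation, the nonnegativity of $(u,v)$, the Liouville theorem, and the ``$c_1=0$'' argument are all indispensable; with those added, your outline would be complete.
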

\begin{proof} It is easy to see $(\bar{u}, \bar{v})$ is a pair of nonnegative solutions to (\ref{011803}). From (\ref{011801}), we have 
	$$ \bar{v}(x)=\int_{R^n}\frac{|y|^{-\gamma}\bar{u}^p(y)}{|x-y|^{n-\beta}}dy.$$
Then, we only need to show 
$$	\bar{u}(x)=\int_{R^n}\frac{|y|^{-\gamma}\bar{v}(y)\bar{u}^{p-1}(y)}{|x-y|^{n-\alpha}}dy.$$	
We first show that
\begin{eqnarray}\label{02.1}
\bar{u}(x)=c_1+\int_{R^n}\frac{|y|^{-\gamma}\bar{v}(y)\bar{u}^{p-1}(y)}{|x-y|^{n-\alpha}}dy.
\end{eqnarray}
Let
\begin{eqnarray}\label{02.2}
&\bar{u}_R(x)=\int_{B_R(0)}G_R(x,y)|y|^{-\gamma}\bar{v}(y)\bar{u}^{p-1}(y)dy,
\end{eqnarray}
where $G_R(x,y)$ is the Green's function of fractional Laplacian on
$B_R(0)$.

It is easy to see that
\begin{eqnarray}\label{02.3}
\left\{
\begin{array}{ll}
(-\triangle)^{\alpha/2}\bar{u}_R(x)=|x|^{-\gamma}\bar{v}(x)\bar{u}^{p-1}(x),\,\,&\mbox{in}\,\,B_R(0)\backslash\{0\},\\
\bar{u}_R(x)=0,\,\,\,\,\,&\mbox{on}\,\,B_R^c(0).
\end{array}
\right.
\end{eqnarray}
Let $\varphi_R(x)=\bar{u}(x)-\bar{u}_R(x)$, from
(\ref{011803}) and (\ref{02.3}), we have
\begin{eqnarray*}
	\left\{
	\begin{array}{ll}
		(-\triangle)^{\alpha/2}\varphi_R(x)=0,\,\,&\mbox{in}\,\,B_R(0),\\
		\varphi_R(x)\geq 0,\,\,\,\,\,&\mbox{on}\,\,B_R^c(0).
	\end{array}
	\right.
\end{eqnarray*}
By the Maximum Principle, we derive
\begin{eqnarray}\label{02.4}
\varphi_R(x)\geq 0,\,\,\,x\in R^n.
\end{eqnarray}
Therefore, when $R\rightarrow \infty$,

\begin{eqnarray}\label{02.5}
\bar{u}_R(x)\rightarrow \tilde{u}(x)=\int_{R^n}\frac{|y|^{-\gamma}\bar{v}(y)\bar{u}^{p-1}(y)}{|x-y|^{n-\alpha}}dy,
\end{eqnarray}

Moreover,
\begin{eqnarray}\label{02.6}
(-\triangle)^{\alpha/2}\tilde{u}(x)=|x|^{-\gamma}\bar{v}(x)\bar{u}^{p-1}(x),\,\,\,&x\in R^n,
\end{eqnarray}
Now let $\Phi(x)=\bar{u}(x)-\tilde{u}(x)$.
From (\ref{1.1}) and (\ref{02.6}), we have
\begin{eqnarray*}
	\left\{
	\begin{array}{ll}
		(-\triangle)^{\alpha/2}\Phi(x)=0,\,\,\,&x\in R^n,\\
		\Phi(x)\geq 0,\,\,\,\,&x\in R^n.
	\end{array}
	\right.
\end{eqnarray*}
From Proposition 2 in \cite{ZCCY}, we have
$$\Phi(x)=c_1.$$
Thus we proved (\ref{02.1}).

Next, we will show that $c_1=0$. If $c_1>0$, then from (\ref{02.1}) and the fact $p\geq\frac{n}{n-\alpha}$, we have
$$
v(x)=\int_{R^n}\frac{|y|^{-\gamma}u^{p}(y)}{|x-y|^{n-\beta}}dy\geq \int_{R^n}\frac{c_1^p|y|^{-\gamma}}{|x-y|^{n-\beta}}dy=\infty.
$$
But it is impossible. Hence $c_1=0$. Therefore,
\begin{eqnarray*}
	\left\{
	\begin{array}{ll}
		&\bar{u}(x)=\int_{R^n}\frac{|y|^{-\gamma}\bar{v}(y)\bar{u}^{p-1}(y)}{|x-y|^{n-\alpha}}dy,\\
		&\bar{v}(x)=\int_{R^n}\frac{|y|^{-\gamma}\bar{u}^p(y)}{|x-y|^{n-\beta}}dy.
	\end{array}
	\right.
\end{eqnarray*}
We complete our proof.
\end{proof}
\begin{lemma}\label{lem5.2}
For $\lambda_0<0$, if either of $U_{\lambda_0}$, $V_{\lambda_0}$ is not identically 0, then there exist some constant $C$ and $\varepsilon>0$ small such that
$$U_{\lambda_0}(x), V_{\lambda_0}(x)\geq C>0, \,\,x\in B_\varepsilon(0^{\lambda_0})\setminus\{0^{\lambda_0}\}.$$
\end{lemma}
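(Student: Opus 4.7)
The plan is to combine the integral representation from Lemma \ref{5.3} with a symmetrization of the domain and a continuity/compactness argument to extract a uniform positive lower bound for $U_{\lambda_0}$ and $V_{\lambda_0}$ near $0^{\lambda_0}$.

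First I would use Lemma \ref{5.3} to write
$$U_{\lambda_0}(x)=\int_{R^n}\Bigl[\frac{1}{|x^{\lambda_0}-y|^{n-\alpha}}-\frac{1}{|x-y|^{n-\alpha}}\Bigr]|y|^{-\gamma}\bar{v}(y)\bar{u}^{p-1}(y)\,dy,$$
split the domain into $\Sigma_{\lambda_0}$ and $\tilde{\Sigma}_{\lambda_0}$, and substitute $y\mapsto y^{\lambda_0}$ in the second piece. Using $|x^{\lambda_0}-y^{\lambda_0}|=|x-y|$ and $|x-y^{\lambda_0}|=|x^{\lambda_0}-y|$, the two pieces combine into the symmetric formula
$$U_{\lambda_0}(x)=\int_{\Sigma_{\lambda_0}}\Bigl[\frac{1}{|x^{\lambda_0}-y|^{n-\alpha}}-\frac{1}{|x-y|^{n-\alpha}}\Bigr]\bigl[f(y)-f(y^{\lambda_0})\bigr]dy,$$
where $f(y):=|y|^{-\gamma}\bar{v}(y)\bar{u}^{p-1}(y)$. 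For $x,y\in\Sigma_{\lambda_0}$ the first bracket is negative (since $|x-y|<|x^{\lambda_0}-y|$); and since $\lambda_0<0$ a direct calculation gives $|y|\ge|y^{\lambda_0}|$ on $\Sigma_{\lambda_0}$, so combined with the standing inequalities $U_{\lambda_0},V_{\lambda_0}\ge 0$ one gets $f(y)\le f(y^{\lambda_0})$. Hence the integrand is pointwise nonnegative.

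Next I would convert nonnegativity into a strict lower bound. Since $U_{\lambda_0}$ or $V_{\lambda_0}$ is not identically zero, a strong maximum principle of the same type carried out in Case~i of the subcritical Step~2 earlier (pointwise computation of $(-\triangle)^{\alpha/2}U_{\lambda_0}$ or $(-\triangle)^{\beta/2}V_{\lambda_0}$ at a hypothetical interior zero yields a contradiction) shows that on some nonempty open set $\Omega_0\subset\Sigma_{\lambda_0}$ one has $\bar{u}(y^{\lambda_0})>\bar{u}(y)$ or $\bar{v}(y^{\lambda_0})>\bar{v}(y)$ strictly. By continuity I pick a closed ball $K\subset\Omega_0$ disjoint from $\{0^{\lambda_0}\}$ on which $f(y^{\lambda_0})-f(y)\ge c_1>0$. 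A direct calculation $|0^{\lambda_0}-y|^2-|y|^2=4\lambda_0(\lambda_0-y_1)<0$ shows $|0^{\lambda_0}-y|<|y|$ for $y\in\Sigma_{\lambda_0}$ when $\lambda_0<0$, so the kernel $\tfrac{1}{|x-y|^{n-\alpha}}-\tfrac{1}{|x^{\lambda_0}-y|^{n-\alpha}}$ is jointly continuous and strictly positive at $x=0^{\lambda_0}$; by compactness it is bounded below by some $c_2>0$ on $\overline{B_\varepsilon(0^{\lambda_0})}\times K$ for $\varepsilon$ small enough that $\overline{B_\varepsilon(0^{\lambda_0})}\cap K=\emptyset$. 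Restricting the nonnegative integrand to $K$ then gives
$$U_{\lambda_0}(x)\ge c_1 c_2|K|=:C>0,\qquad x\in B_\varepsilon(0^{\lambda_0})\setminus\{0^{\lambda_0}\}.$$
The same scheme, starting from the second integral formula $\bar{v}(x)=\int|y|^{-\gamma}\bar{u}^p(y)|x-y|^{-(n-\beta)}\,dy$ of Lemma \ref{5.3}, produces the companion bound for $V_{\lambda_0}$.

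The main obstacle I expect is the rigorous upgrade of ``not identically zero'' to strict positivity on an open set: in the local elliptic setting this is Hopf's lemma, but here one has to mimic the nonlocal pointwise argument used earlier in the manuscript, and be careful that the right hand sides of the equations for $U_{\lambda_0}$ and $V_{\lambda_0}$ blow up at $0^{\lambda_0}$, so the maximum principle must be applied on a region bounded away from that singular point (which is harmless because we only need an open set of strict positivity somewhere in $\Sigma_{\lambda_0}$). The remaining steps are pure continuity and compactness on a compact set bounded away from $0^{\lambda_0}$.
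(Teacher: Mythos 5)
Your proposal follows essentially the same route as the paper: start from the integral representation in Lemma~\ref{5.3}, fold the integral over $\Sigma_{\lambda_0}$ using $|x^{\lambda_0}-y^{\lambda_0}|=|x-y|$ and $|x-y^{\lambda_0}|=|x^{\lambda_0}-y|$, observe that for $y\in\Sigma_{\lambda_0}$ and $\lambda_0<0$ one has $|y^{\lambda_0}|\le|y|$ (so, together with $\gamma\ge0$ and the already-established $U_{\lambda_0},V_{\lambda_0}\ge0$, the bracketed difference $f(y^{\lambda_0})-f(y)$ is nonnegative), restrict the integration to a small ball where the integrand is bounded below, and use positivity and continuity of the kernel near $x=0^{\lambda_0}$. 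This is exactly the paper's strategy for Lemma~\ref{lem5.2}; your proposal just fills in explicitly the kernel lower bound (including the computation $|0^{\lambda_0}-y|^2-|y|^2=4\lambda_0(\lambda_0-y_1)<0$) that the paper leaves implicit. The one unnecessary detour in your plan is the worry about a nonlocal strong-maximum-principle step to go from ``not identically zero'' to ``strictly positive on an open set.'' No such argument is needed: $U_{\lambda_0}$ (or $V_{\lambda_0}$) is continuous, nonnegative, and not identically zero on $\Sigma_{\lambda_0}$, so there is by definition a point $x_0$ with $U_{\lambda_0}(x_0)>0$, and ordinary continuity already gives a ball $B_\delta(x_0)$ (which one may take disjoint from $\{0^{\lambda_0}\}$ and from $B_\varepsilon(0^{\lambda_0})$) on which $\overline{u}_{\lambda_0}^p(y)-\overline{u}^p(y)\ge c>0$. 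That is precisely the step the paper takes; no Hopf-type lemma is invoked or required here.
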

\begin{proof} From Lemma \ref{5.3}, we have the integral equation
\begin{eqnarray*}
  V_{\lambda_0}(x)&=& \overline{v}_{\lambda_0}(x)-\overline{v}(x) \\
   &=& C_{n,\beta}\int_{\Sigma_{\lambda_0}}(|y^{\lambda_0}|^{-\gamma}\bar{u}^{p}(y^{\lambda_0})-|y|^{-\gamma}\bar{u}^{p}(y))
   (\frac{1}{|x-y|^{n+\beta}}-\frac{1}{|x-y^{\lambda_0}|^{n+\beta}})dy \\
   &\geq& C_{n,\beta}\int_{\Sigma_{\lambda_0}}\frac{\overline{u}_{\lambda_0}^{p}(y)-\overline{u}^{p}(y)}{|y|^\gamma}
   \cdot(\frac{1}{|x-y|^{n+\beta}}-\frac{1}{|x-y^{\lambda_0}|^{n+\beta}})dy.
\end{eqnarray*}
Since
$$U_{\lambda_0}(x)\not\equiv 0,\,x\in\Sigma_{\lambda_0},$$
there exists some $x_0$ such that
$U_{\lambda_0}(x_0)>0.$
Thus, for some $\delta>0$ small, it holds that
$$\overline{u}_{\lambda_0}^p(y)-\overline{u}^p(y)\geq C>0, \,y\in B_\delta(x_0).$$
Therefore,
\begin{equation}\label{5.7}
 V_{\lambda_0}(x)\geq\int_{B_\delta(x_0)}Cdy\geq C>0.
\end{equation}
In a same way, one can show that $U_{\lambda_0}(x)$ also satisfies (\ref{5.7}).
\end{proof}
\noindent{\bf Acknowledgement}

The research was supported by NSFC(NO.11571176). The authors would like to express sincere thanks
to the anonymous referee for his/her carefully reading the
manuscript and valuable comments and suggestions.

\bigskip

{\em Author's Addresses and Emails:}
\medskip

Pei Ma

Jiangsu Key Laboratory for NSLSCS

School of Mathematical Sciences

Nanjing Normal University

Nanjing, Jiangsu 210023, China;

Department of Mathematical Sciences

Yeshiva University

New York, NY, 10033, USA

mapei0620@126.com

\medskip

Jihui Zhang

Jiangsu Key Laboratory for NSLSCS

School of Mathematical Sciences

Nanjing Normal University

Nanjing, Jiangsu 210023, China

zhangjihui@njnu.edu.cn
\end{document}